\newtheorem{theorem}{Theorem}[section]
\newtheorem{lemma}[theorem]{Lemma}
\newtheorem{corollary}[theorem]{Corollary} 
\newtheorem{proposition}[theorem]{Proposition} 
\theoremstyle{remark}
\newtheorem{remark}[theorem]{Remark}
\numberwithin{equation}{section}
    \newcommand*{\RR}{\mathbb{R}}
    \newcommand*{\NN}{\mathbb{N}}
    \newcommand*{\ind}[1]{\mathbf{1}_{#1}}
    \newcommand*{\indbr}[1]{\ind{\{#1\}}}
    \newcommand*{\alphaA}{\alpha_0}
    \newcommand*{\betaA}{\beta_0}
    \newcommand*{\betaB}{\beta_1}
    \newcommand*{\aBA}{a_*} 
    \newcommand*{\betaBA}{{\beta}_*} 
    \newcommand*{\alphaBbis}{\alpha_2}
    \newcommand*{\betaBbis}{\beta_2} 
\newcounter{proofincases}
\newcommand\nextcase{Case \stepcounter{proofincases}\arabic{proofincases}}
\newcommand\nextcaselabel[1]{Case \refstepcounter{proofincases}\label{#1}\arabic{proofincases}}
\begin{document}

\title[Hardy's operator minus identity and power weights]{Hardy's operator minus identity\\ and power weights}
\author[M. Strzelecki]{Micha\l{} Strzelecki}
\address{Institute of Mathematics, University of Warsaw, Banacha 2, 02--097 Warsaw, Poland.}
\email{michalst@mimuw.edu.pl}
\thanks{Research partially
supported by the National Science Centre, Poland, via the Preludium grant no.\ 2015/19/N/ST1/00891.}

\begin{abstract}
Let $H$ be the Hardy operator and $I$ the identity operator acting on functions on the real half-line. 
We find optimal bounds for
the operator $H - I$ in the setting of power weights and the cases of positive decreasing functions, positive functions, and
general functions. 
As a byproduct, we obtain some results about the optimal relations between the norms of $H$  and its dual.
\end{abstract}

\date{September 11, 2019}

\subjclass[2010]{%
Primary 26D10; 
Secondary 46E30. 
}

\keywords{Hardy operator, power weights, best constants. 
}

\maketitle

\section{Introduction}

For a locally integrable function $f\colon [0,\infty) \to \RR$ the Hardy operator is defined by 
\begin{equation*}
  H f(t) := \frac{1}{t} \int_0^t f(s) ds, \quad t>0.
\end{equation*}
Hardy's inequality \cite[Chapter 9]{MR0046395} states that for $1\leq p < \infty$ and $a<p-1$, 
\begin{equation}
\label{eq:Hardy}
\int_0^\infty |Hf(t)|^p t^a dt \leq \Bigl(\frac{p}{p-1-a} \Bigr)^p \int_0^\infty |f(t)|^p t^a dt.
\end{equation}
Necessary and sufficient conditions for this inequality to hold with $t^a$  replaced by more general weights
were found by Muckenhoupt \cite{MR0311856}.
Subsequently, Ari\~{n}o and Muckenhoupt \cite{MR989570} and Sawyer \cite{MR1052631}  studied the boundedness of $H$ on a class of classical Lorentz spaces which led them to consider the general weighted inequality
but restricted to positive decreasing functions.

In this paper we are interested in optimal constants in weighted bounds for the operator $H-I$ acting on positive decreasing functions, positive functions, and general functions. Let us mention two important functional analytic motivations for this problem.  The case of positive decreasing functions is obviously connected to the concept of monotone rearrangements and is motivated by results about normability and embeddings of function spaces (see, e.g., \cite{MR621018,MR2182593,MR2411048}). On the other hand, the case of general functions in $L^p([0,\infty))$ corresponds to the study of the Beurling--Ahlfors transform acting on radial functions (see, e.g., \cite{MR2595549,MR3558516} and the references therein) and is interesting even in the unweighted setting.
Before presenting the contribution of the current paper, let us discuss relevant prior results.

Using a multinomial theorem Kruglyak and Setterqvist \cite{MR2390520} proved that for \emph{integer} $p\geq 2$ and any positive decreasing $f\colon[0,\infty)\to \RR$,
\begin{equation*}
 \label{eq:KS08}
 \int_0^\infty |Hf(t) -f(t)|^p dt \leq \frac{1}{p-1} \int_0^\infty |f(t)|^p dt.
\end{equation*}

Boza and Soria \cite{MR2747011} extended this to all values of $p\in[2,\infty)$ by providing a more general result. Namely, they proved that if $p\geq 2$ and $w$ is a weight in the Ari\~{n}o--Muckenhoupt class  $B_p$,
i.e.,
\[
 \|w\|_{B_p} \coloneqq \sup_{t>0} \frac{t^p \int_t^\infty \frac{w(s)}{s^p} ds}{\int_0^t w(s) ds}<\infty,
\]
and moreover
\begin{equation}
 \label{eq:BS11-condition}
 t^{p-1} \int_t^\infty \frac{w(s)}{s^p} \leq \frac{w(t)}{p-1} \quad \text{a.e.},
\end{equation}
then for any positive decreasing $f\colon[0,\infty)\to \RR$,
\begin{equation}
 \label{eq:BS11-general}
 \int_0^\infty |Hf(t) -f(t)|^p w(t) dt \leq \|w\|_{B_p} \int_0^\infty |f(t)|^p w(t) dt.
\end{equation}
The condition \eqref{eq:BS11-condition} holds, e.g., for decreasing weights.
In particular, for $p\geq 2$, $a\in (-1,0]$, and any positive decreasing $f\colon[0,\infty)\to \RR$,
\begin{equation*}
  \int_0^\infty |Hf(t) -f(t)|^p t^a dt \leq \frac{1+a}{p-1-a} \int_0^\infty |f(t)|^p t^a dt.
\end{equation*}

Boza and Soria noticed that \eqref{eq:BS11-general} holds also for $p=1$ and $w\in B_1$ (without any further assumptions), but fails for $1<p<2$ (e.g., if $w\equiv 1$) as well as for 
$p\geq 2$ when the monotonicity hypothesis on $w$ is dropped (e.g., if $w(t) = t^a$ with $0<a<p-1)$.

For $1 < p \leq 2$ the best constant in the estimate \eqref{eq:KS08}, still for positive decreasing functions,  is equal to $\frac{1}{(p-1)^p}$, as shown by Kolyada \cite{MR3180926}. 
His approach relies on reducing the problem to proving that for $1<p\leq 2$ and any positive function $g\colon [0,\infty)\to\RR$,
\begin{equation*}
  \int_0^\infty |Hg(t)|^p dt \leq \frac{1}{(p-1)^p} \int_0^\infty |H^*g(t)|^p dt.
\end{equation*}
where
\[
 H^* g(t) \coloneqq \int_t^\infty \frac{g(s)}{s} ds 
\]
is the dual of $H$. Here one does not have to assume that $g\in L^p([0,\infty))$
(cf.\ Section~\ref{sec:Hstar} below).

In an independent series of papers regarding the  action of the Beurling--Ahlfors transform on radial functions, the operator $H-I$ acting on the whole space $L^p([0,\infty))$ was studied.
Ba\~{n}uelos and Janakiraman \cite{MR2595549} proved that for $1<p\leq 2$ and general $f\colon [0,\infty) \to \RR$,
\begin{equation}
 \label{eq:BJ09}
  \int_0^\infty |Hf(t) -f(t)|^p dt \leq \frac{1}{(p-1)^p} \int_0^\infty |f(t)|^p dt;
\end{equation}
alternative proofs were given in \cite{MR3018958} and \cite{volberg}.
The best constant in the complementary range $p>2$ was found by the author in \cite{MR3558516}:  the expression is more complicated and involves the root of an equation. The proof  in \cite{MR2595549} is based on properties of stretch functions, while \cite{MR3018958,volberg,MR3558516} exploit ideas connected to the Bellman function technique and Burkholder's work on martingale inequalities.

Finally, Boza and Soria \cite{MR3868629} observed recently that for $p\geq 2$ and any positive $f\colon[0,\infty)\to\RR$,
\begin{equation*}
   \int_0^\infty |Hf(t) -f(t)|^p dt \leq \int_0^\infty |f(t)|^p dt.
\end{equation*}

In the current paper we find the best constants in the estimate
\begin{equation*}
    \int_0^\infty |Hf(t) -f(t)|^p t^a dt \lesssim \int_0^\infty |f(t)|^p t^a dt, \quad f\in \mathcal{C},
\end{equation*}
for the whole range of admissible parameters $p$ and $a$, when $\mathcal{C}$ is one of the following classes: positive decreasing functions, positive functions, general functions. 
To  this end we adapt the techniques of \cite{MR3558516}. This enables us to provide a uniform framework for all inequalities and kill several birds with one stone.  Our approach gives relatively elementary---and in many cases quite short---proofs. It also  demonstrates how and why the optimal constant changes when we change the values of $p$ and $a$ or modify the class of functions involved.

\section{Results}

For given $1\leq p < \infty$  and $a < p-1 $ denote by $C_{p,a}$ (resp.\ $B_{p,a}$, resp.\ $A_{p,a}$), the smallest constant $K=K(p,a)$ for which the inequality   
\begin{equation*}
\int_0^\infty |Hf(t) -f(t)|^p t^a dt \leq K^p \int_0^\infty |f(t)|^p t^a dt
\end{equation*}
is satisfied for all (resp.\ all positive, resp.\ all positive and decreasing)  functions $f\colon [0,\infty)\to \RR$ for which the right-hand side is finite.
Note that in the case  of $A_{p,a}$ we can consider only $a\in(-1,p-1)$ since for $a\leq -1$  and non-trivial positive decreasing $f$ the integral on the right-hand side is always infinite.

For $1\leq p<\infty$, $a <p-1$, and $\alpha < \frac{p-1-a}{p} < \beta$ denote
 \begin{equation}
 \label{eq:def_k}
k_{p,a}(\alpha, \beta) \coloneqq \frac{(\beta-\frac{p-1-a}{p})|\alpha-1|^p + (\frac{p-1-a}{p}-\alpha)|\beta-1|^p}{(\beta-\frac{p-1-a}{p})|\alpha|^p + (\frac{p-1-a}{p}-\alpha)|\beta|^p}.
\end{equation}
The following theorem follows from the results obtained by the author  in \cite{MR3558516}.

\begin{theorem}
 \label{thm:C}
  For $1\leq p < \infty$  and $a < p-1$, 
 \begin{equation*}
  C_{p,a}^p= \sup\Big\{ k_{p,a}(\alpha, \beta)    : \ \alpha < \frac{p-1-a}{p} < \beta\Big\}.
 \end{equation*}
\end{theorem}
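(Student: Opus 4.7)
The plan is to reduce the weighted problem defining $C_{p,a}$ to an unweighted inequality for a dilate of $H$, and then appeal to the results of \cite{MR3558516}. Set $\mu := (p-1)/(p-1-a)$, which is a positive real number by the hypothesis $a < p-1$ (the edge case $p = 1$ requires separate treatment). For $f\colon [0,\infty)\to\RR$ I would introduce the transform
\[
\tilde f(u) := u^{((a+1)\mu-1)/p}\, f(u^\mu), \quad u>0.
\]
A direct substitution $t = u^\mu$ shows that $f\mapsto\tilde f$ is, up to the factor $\mu^{1/p}$, an isometry from $L^p([0,\infty), t^a\,dt)$ onto $L^p([0,\infty), du)$:
\[
\int_0^\infty |f(t)|^p t^a\,dt \;=\; \mu \int_0^\infty |\tilde f(u)|^p\, du.
\]
The particular choice of $\mu$ is made so that the fractional powers of $u$ that arise when $(H - I)f(u^\mu)$ is rewritten through $\tilde f$ all collapse, yielding the clean identity
\[
u^{((a+1)\mu-1)/p}\,\bigl((H - I)f\bigr)(u^\mu) \;=\; (\mu H - I)\tilde f(u).
\]
Combining the two displays gives
\[
\int_0^\infty |(H-I)f(t)|^p t^a\,dt \;=\; \mu \int_0^\infty |(\mu H - I)\tilde f(u)|^p\, du,
\]
so that $C_{p,a}^p$ is precisely the $p$-th power of the unweighted $L^p(du)$-operator norm of $\mu H - I$.

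With the problem reformulated in this way, I would invoke the unweighted analysis of \cite{MR3558516}. The Bellman--function/Burkholder arguments developed there are tailored to operators of the form $\lambda H - I$ on $L^p([0,\infty))$ and produce an optimal constant equal to the supremum of a two-parameter expression in test values $\alpha,\beta$ separated by the critical exponent attached to $\lambda H - I$. For $\lambda = \mu = (p-1)/(p-1-a)$, a short computation shows that this critical exponent, translated into the parameters natural to~\eqref{eq:def_k}, is exactly $(p-1-a)/p$, and that the two-parameter expression coincides term by term with $k_{p,a}(\alpha,\beta)$. Taking the supremum over admissible $\alpha < (p-1-a)/p < \beta$ then yields the desired formula for $C_{p,a}^p$.

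The main obstacle is the bookkeeping. One has to verify (i) that the change of variables is tight, so that no constant is lost in either direction, and (ii) that the analysis of \cite{MR3558516}, originally written for $H - I$, carries over essentially unchanged to the dilate $\mu H - I$, reproducing~\eqref{eq:def_k} with the correctly shifted critical exponent. Both steps are in principle routine computations, but with several different ``critical exponents'' simultaneously in play---the Hardy index $(p-1-a)/p$, the fixed-point exponent of $H$, and the one attached to $\mu H - I$---this is where parameter or sign errors would most easily slip in.
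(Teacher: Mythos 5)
Your reduction is correct as far as it goes, and in essence it is the same strategy as the paper's: both proofs dispose of the weight by conjugating $H-I$ into the two-parameter family of operators covered by Theorem~1.1 of \cite{MR3558516} and then quote that theorem. The difference is only in which specialization you land on. The paper substitutes $g(t)=f(t)t^{a/p}$ (no change of variable in $t$), which keeps the dilation parameter $\lambda=1$ and moves the weight into the kernel exponent $m=-2a/p$; you substitute $t=u^\mu$ with $\mu=(p-1)/(p-1-a)$, which keeps $m=0$ and moves the weight into the dilation $\lambda=\mu$. Your identities do check out: with this $\mu$ one has $((a+1)\mu-1)/p=\mu-1$, the map $f\mapsto\tilde f$ is a bijection and an isometry up to the factor $\mu^{1/p}$, and $u^{\mu-1}\bigl((H-I)f\bigr)(u^\mu)=(\mu H-I)\tilde f(u)$; moreover the extremal family transforms by $\alpha\mapsto\mu\alpha$, $\beta\mapsto\mu\beta$, under which the critical exponent $(p-1)/p$ corresponds to $(p-1-a)/p$ and the unweighted ratio for $\mu H-I$ becomes exactly $k_{p,a}(\alpha,\beta)$.

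Two gaps remain. First, $p=1$ is part of the statement and your reduction degenerates there: $\mu=(p-1)/(p-1-a)=0$, and in any case the $m=0$ case of the cited theorem requires $p>1$, as the paper itself notes. The paper handles $p=1$, $a<0$ by a short separate argument: the affine function $u(x)=\frac{1}{a}(a+x)$ majorizes $v(x)=|1-x|-\frac{1-a}{-a}|x|$ on all of $\RR$, which via Proposition~\ref{prop:majorize} gives the upper bound, matched by $\lim_{\beta\to\infty}k_{1,a}(0,\beta)$ from Lemma~\ref{lem:lower_bound}; you acknowledge the case but supply nothing, so this piece of the theorem is unproved in your write-up. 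Second, your appeal to \cite{MR3558516} needs Theorem~1.1 there to apply with $m=0$ and \emph{every} $\lambda=\mu\in(0,\infty)$ (note $\mu\to 0^+$ as $a\to-\infty$ and $\mu\to\infty$ as $a\to(p-1)^-$) and to yield precisely the supremum you describe, whereas the paper's specialization $\lambda=1$, $m=-2a/p>-2(p-1)/p$ lands exactly on the hypotheses as stated there. This admissible-$\lambda$ verification is the bookkeeping you flag, but it must actually be carried out against the statement of that theorem; if its $\lambda$-range is restricted, your normalization fails for some $a$, while the paper's does not.
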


Using this result one can check, that for $p>1$ and $a=0$,
\begin{equation}
\label{eq:Cp-intr}
C_{p,0} ^p = \sup_{\alpha\leq (p-1)/p} k_{p,0}(\alpha, 1) =  
  \begin{cases}
  \frac{1}{(p-1)^p} & \text{if } 1<p\leq 2,\\
  \frac{(1+|\alpha_p|)^{p-2}}{p-1} & \text{if } p> 2,
  \end{cases}
\end{equation}
where, for $2<p<\infty$,  $\alpha_p\in\RR$ is the unique negative solution to the equation $(p-1)\alpha_p + 2-p = |\alpha_p|^{p-2}\alpha_p$. 
Also, for $p= 1$ and $a<0$,
\begin{equation}
\label{eq:C1-intr}
 C_{1,a} =  \frac{1-a}{-a}.
\end{equation}
However, in general there seems to be no simple expression for $C_{p,a}$.

It turns out that the values of the constants  $A_{p,a}$, $B_{p,a}$ are also connected to suprema of the function $k_{p,a}$ over certain sets (see Lemma~\ref{lem:lower_bound} below). Moreover, simple expressions similar to \eqref{eq:Cp-intr} or \eqref{eq:C1-intr} can be found.

We start with bounds for $H-I$ on positive decreasing functions. The results are new apart from the cases 
$1< p\leq 2$ with $a=0$ (when $A_{p,0}=B_{p,0}=C_{p,0} = 1/(p-1)$) and $p\geq 2$ with $-1< a \leq 0$ (see \cite{MR2747011}). 
In particular, in contrast to \cite{MR2747011}, we are able to obtain sharp estimates also for increasing  (power) weights.

\begin{theorem}
\label{thm:A}
 For $1\leq p < \infty$  and $-1 < a < p-1$, 
 \begin{equation*}
A_{p,a}^p=
\begin{cases}
 \frac{1+a}{-a} & \text{ if } p=1,\\
  \bigl(\frac{1+a}{p-1-a}\bigr)^p & \text{ if } 1<p\leq2,\quad p-1-p^{\frac{p-2}{p-1}} \leq a < p-1,\\
  k_{p,a}(\alphaA,1) & \text{ if } 1<p\leq2,\quad p-2 <a < p-1-p^{\frac{p-2}{p-1}},\\
  \frac{1+a}{p-1-a} & \text{ if } 1<p\leq 2,\quad -1 <a\leq p-2,\\
\bigl(\frac{1+a}{p-1-a}\bigr)^p & \text{ if } p>2 ,\quad p^{\frac{p-2}{p-1}} -1 \leq a < p-1,\\
   k_{p,a}(0,\betaA) & \text{ if } p>2 ,\quad 0 <a < p^{\frac{p-2}{p-1}}-1,\\
  \frac{1+a}{p-1-a} & \text{ if } p>2,\quad -1 <a\leq 0.\\
  \end{cases}
\end{equation*}
Here $\alphaA$ and $\betaA$ are defined as follows: for $1<p<2$ and $p-2 <a < p-1-p^{\frac{p-2}{p-1}}$, $\alphaA=\alphaA(p,a)$ is the unique solution of the equation
\begin{equation*}
-(p-1)(1-\alphaA) + 1+a -(1+a)\alphaA^{p-1} =0
\end{equation*}
in the interval $(0, (p-1-a)/p)$;
 for $p>2$ and $0 <a < p^{\frac{p-2}{p-1}}-1$, $\betaA=\betaA(p,a)$ is the unique solution of the equation
\begin{equation*}
-(p-1-a)(1-\betaA)^{p-1}-(p-1)\betaA +p-1-a =0
\end{equation*}
in the interval $((p-1-a)/p,1)$.
\end{theorem}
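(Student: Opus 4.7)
The plan is to reduce Theorem~\ref{thm:A} to an explicit optimization of $k_{p,a}$ over a rectangle and then to work out the resulting calculus case by case.

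For the lower bound I would invoke the positive-decreasing case of Lemma~\ref{lem:lower_bound} (to which the text points forward) to obtain
\[
A_{p,a}^p \;\geq\; \sup\bigl\{k_{p,a}(\alpha,\beta): 0\leq\alpha<\gamma<\beta\leq 1\bigr\},
\]
where $\gamma \coloneqq (p-1-a)/p$; the restrictions $\alpha\geq 0$ and $\beta\leq 1$ encode positivity and monotonicity of $f$, respectively. Test functions realizing this supremum are combinations of the Hardy extremizer $f(t)=t^{-(1+a)/p}$ (accounting for the diagonal limit $(\alpha,\beta)\to(\gamma,\gamma)$) and the indicator $f=\mathbf{1}_{[0,1]}$ (accounting for the corner $(0,1)$). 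For the matching upper bound I would adapt the Bellman-function / concave-majorant argument used in \cite{MR3558516} for $C_{p,a}$: since positivity and decrease of $f$ force $0\leq f\leq Hf$ pointwise, the required Bellman function only needs to dominate $|x-1|^p-K^p y^p$ on the quadrant $\{0\leq y\leq x\}$ rather than on all of $\RR^2$, and the least admissible $K^p$ coincides with the supremum above. Combined, these two bounds reduce the theorem to the calculus problem of computing that supremum.

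Two candidate extremal values of $k_{p,a}$ are immediate: the corner value $k_{p,a}(0,1)=(1+a)/(p-1-a)$ and the diagonal limit $\lim_{(\alpha,\beta)\to(\gamma,\gamma)}k_{p,a}(\alpha,\beta)=((1+a)/(p-1-a))^p$. When $(1+a)/(p-1-a)\leq 1$ the corner dominates, accounting for the subcases with answer $(1+a)/(p-1-a)$; when $(1+a)/(p-1-a)$ is sufficiently large, the diagonal limit dominates, giving the subcases with answer $((1+a)/(p-1-a))^p$; in an intermediate range the maximum is attained at an interior critical point of the single-variable function $\alpha\mapsto k_{p,a}(\alpha,1)$ (for $1<p<2$) or $\beta\mapsto k_{p,a}(0,\beta)$ (for $p>2$). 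Differentiating each of these (a short computation that I would carry out explicitly) produces exactly the defining equations for $\alphaA$ and $\betaA$ stated in the theorem, while the numerical thresholds $p-2$, $p-1-p^{(p-2)/(p-1)}$, $0$, and $p^{(p-2)/(p-1)}-1$ arise as the parameters at which two adjacent candidate extrema coincide (e.g.\ $\alphaA\to\gamma$ merges the edge critical point with the diagonal limit). The case $p=1$ is degenerate: a direct computation shows that $k_{1,a}(\alpha,\beta)$ collapses to the constant $(1+a)/(-a)$, yielding the answer immediately.

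The main technical obstacle I anticipate is verifying, for each of the seven subcases, that no other interior critical point of $k_{p,a}$ on the open rectangle $(0,\gamma)\times(\gamma,1)$ beats the edge candidate identified above. Because $k_{p,a}$ is a quotient of non-convex expressions involving $|\cdot|^p$, standard convexity arguments are delicate, particularly near the diagonal degeneration, where numerator and denominator jointly vanish to first order. Pinning down the precise transition thresholds also requires careful manipulation of the transcendental equations for $\alphaA$ and $\betaA$ at the boundaries of their validity intervals.
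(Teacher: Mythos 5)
Your skeleton is the same as the paper's: the lower bound is Lemma~\ref{lem:lower_bound} applied to the family $f_{\alpha,\beta}$, and the upper bound is the homogeneous majorization of Proposition~\ref{prop:majorize}, which for positive decreasing $f$ reduces to finding $D$ with $|1-x|^p-K^p|x|^p\le D\bigl(\gamma-x\bigr)$ on $[0,1]$, where $\gamma\coloneqq(p-1-a)/p$. Your asserted identity that the least admissible $K^p$ equals $\sup\{k_{p,a}(\alpha,\beta):0\le\alpha<\gamma<\beta\le1\}$ is true but you do not prove it (it follows from a two-point argument: plugging $x=\alpha$ and $x=\beta$ into the majorization and eliminating $D$ gives $K^p\ge k_{p,a}(\alpha,\beta)$; conversely $K^p\ge\sup k_{p,a}$ makes the interval of admissible slopes $D$ nonempty). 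More seriously, that identity only converts the theorem into the two-variable optimization over the rectangle, and this is exactly where your proposal stops: the evaluation of that supremum---in particular the exclusion of interior critical points in $(0,\gamma)\times(\gamma,1)$, which you yourself flag as the main obstacle---\emph{is} the content of the seven-case formula, and it is not carried out. As written you obtain at best $A_{p,a}^p=\sup k_{p,a}$ over the rectangle, not the explicit case list. Your guiding heuristic is also off: the corner $k_{p,a}(0,1)=(1+a)/(p-1-a)$ is the answer precisely for $a\le p-2$ ($1<p\le2$) and $a\le0$ ($p>2$), not when $(1+a)/(p-1-a)\le1$; for instance for $p>2$ and $0<a\le p/2-1$ the corner value is $\le1$ yet the interior edge critical point $k_{p,a}(0,\betaA)$ strictly beats it, so the case analysis is more delicate than your sketch indicates.

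The paper closes the loop in a way that never requires the two-dimensional optimization, and you could adopt this to repair the argument. For each regime it takes $K^p$ equal to the claimed value and chooses a specific slope $D$: in the two ``diagonal'' regimes $u$ is the tangent of $v(x)=|1-x|^p-K^p|x|^p$ at $x=\gamma$, and the check $v\le u$ on $[0,1]$ uses only the convexity pattern of Lemma~\ref{lem:ccc} plus a single endpoint inequality ($v(1)\le u(1)$, resp.\ $v(0)\le u(0)$), which is exactly what produces the thresholds $p-1-p^{(p-2)/(p-1)}$ and $p^{(p-2)/(p-1)}-1$; in the remaining regimes the slope is taken as $pK^p/(1+a)$ (resp.\ $p/(p-1-a)$), for which $v\le u$ on $[0,1]$ is \emph{literally equivalent} to $K^p\ge k_{p,a}(x,1)$ for all $x\in[0,1)$ (resp.\ $K^p\ge k_{p,a}(0,x)$), i.e.\ to the one-variable edge maximizations settled in Lemmas~\ref{lem:tech-A} and~\ref{lem:tech-A-2}, whose critical-point equations are the ones defining $\alphaA$ and $\betaA$. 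The sandwich ``edge value $\le\sup_{\mathrm{rectangle}}k_{p,a}\le A_{p,a}^p\le$ edge value'' then identifies the rectangle supremum a posteriori, so interior critical points never need to be ruled out. Either supply that one-dimensional verification regime by regime, or carry out the genuinely harder two-dimensional analysis you deferred; without one of these, the proposal has a gap at its central step.
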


We turn to estimates of $H-I$ on positive functions. The results are new for $a\neq 0$ (see \cite{MR3868629}).

\begin{theorem}
\label{thm:B}
For $1\leq p < \infty$  and $a < p-1$, 
 \begin{equation*}
B_{p,a}^p=
\begin{cases}
\frac{1-a}{-a} & \text{ if } p=1,\\
C_{p,a}^p & \text{ if } 1<p <2,\quad p-2 <a < p-1,\\
k_{p,a}(0,\betaB) & \text{ if } 1<p < 2,\quad a\leq p-2,\\
\max\{A_{p,a}\indbr{-1<a}, 1 \} & \text{ if } p\geq 2,\quad a< p-1.
\end{cases}
\end{equation*}
Here $\betaB$ is defined as follows: for $1<p<2$ and $a \leq p-2$, $\betaB=\betaB(p,a)$ is the unique solution of the equation
\begin{equation*}
-(p-1-a)|1-\betaB|^{p-2}(\betaB-1)-(p-1)\betaB +p-1-a =0
\end{equation*}
in the interval $(1,\infty)$.
\end{theorem}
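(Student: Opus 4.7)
The plan is to establish matching lower and upper bounds for $B_{p,a}$ in each of the four cases of the theorem. On the lower-bound side, I would invoke Lemma~\ref{lem:lower_bound} (used analogously in the proofs of Theorems~\ref{thm:A} and \ref{thm:C}), which for positive test functions should yield $B_{p,a}^p\geq k_{p,a}(\alpha,\beta)$ for every admissible pair $(\alpha,\beta)$ subject to the non-negativity constraints $\alpha,\beta\geq 0$ that encode positivity of $f$. On the upper-bound side, two of the four cases fall to soft inclusion arguments, while the other two require Bellman-function constructions adapting \cite{MR3558516} to the half-plane $y\geq 0$ corresponding to positive inputs.

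The case $p=1$ is elementary in both directions. For the upper bound I would use $|Hf-f|\leq Hf+f$ together with the Fubini identity $\int_0^\infty Hf(t)\,t^a\,dt=\frac{1}{-a}\int_0^\infty f(t)\,t^a\,dt$ (valid for $a<0$), which gives $B_{1,a}\leq \frac{1-a}{-a}$. For the matching lower bound, write a general real-valued $f$ as $f=f^+-f^-$ and apply the pointwise triangle inequality $|Hf-f|\leq |Hf^+-f^+|+|Hf^--f^-|$ to obtain $C_{1,a}\leq B_{1,a}$; combined with \eqref{eq:C1-intr} this yields equality. The case $1<p<2$ with $p-2<a<p-1$ is also comparatively soft: the inclusion $B_{p,a}\leq C_{p,a}$ is trivial, and for the reverse one performs a calculus analysis of $k_{p,a}(\alpha,\beta)$ verifying that the supremum in Theorem~\ref{thm:C} is already approached by pairs with $\alpha,\beta\geq 0$, so that Lemma~\ref{lem:lower_bound} produces $B_{p,a}\geq C_{p,a}$.

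For $1<p<2$ with $a\leq p-2$, the lower bound reduces to the computation $\sup_{\beta>c}k_{p,a}(0,\beta)$, where $c=(p-1-a)/p$; setting $\partial_\beta k_{p,a}(0,\beta)=0$ and simplifying yields precisely the equation defining $\betaB$, and the hypothesis $a\leq p-2$ ensures that the unique critical point lies in $(1,\infty)$ and is a maximum. For the matching upper bound, I would construct a Bellman function $\mathcal B(x,y)$ on $\{y\geq 0\}$ following the recipe of \cite{MR3558516}, now with the left endpoint of the extremal curve pinned at $\alpha=0$; the pointwise majorization by $k_{p,a}(0,\betaB)\,y^p$ combined with the usual infinitesimal concavity-type condition integrates along the trajectory $t\mapsto(Hf(t),f(t))$ to yield the estimate.

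The case $p\geq 2$ is the most intricate and contains the main obstacle. The lower bound has two ingredients: $B_{p,a}^p\geq A_{p,a}^p$ when $a>-1$ is trivial by inclusion, and $B_{p,a}^p\geq 1$ follows by testing with a spike family $f_n:=n\ind{(1,1+1/n)}$, for which a direct calculation shows $\int |f_n|^p t^a\,dt\sim n^{p-1}$ and $\int |Hf_n-f_n|^p t^a\,dt\sim n^{p-1}$, the leading contribution coming from the interval $(1,1+1/n)$ where $f_n=n$ while $Hf_n$ is small, so the ratio tends to $1$. The upper bound $B_{p,a}^p\leq \max\{A_{p,a}^p\indbr{-1<a},1\}$ is the main difficulty: a positive $f$ is neither necessarily decreasing (taking it outside the scope of Theorem~\ref{thm:A}) nor spike-like, so one needs a single Bellman majorant that dominates both extremal regimes simultaneously. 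My plan is to take the pointwise minimum of two Bellman functions, one tuned to the $A_{p,a}$-extremal (as in the proof of Theorem~\ref{thm:A}) and one tuned to the constant-$1$ spike extremal (as in Boza--Soria), and to verify that this minimum still satisfies the requisite infinitesimal inequality across the seam where the two branches meet; the convexity of $y\mapsto|x-y|^p$ for $p\geq 2$ is what makes such a patching viable.
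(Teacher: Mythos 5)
Your overall architecture (lower bounds from Lemma~\ref{lem:lower_bound}, upper bounds from a pointwise majorization in the spirit of Proposition~\ref{prop:majorize}) is the paper's architecture, and three of your four cases are essentially sound: the $p=1$ case (your route via the $L^1$ triangle inequality $|Hf-f|\le|Hf^+-f^+|+|Hf^--f^-|$ is a legitimate alternative to the paper's one-line observation that $C_{1,a}=\lim_{\beta\to\infty}k_{1,a}(0,\beta)$ is already a lower bound for $B_{1,a}$); the case $1<p<2$, $a\le p-2$ (your optimization of $k_{p,a}(0,\cdot)$ and linear majorant through $(0,v(0))$ is exactly Lemma~\ref{lem:tech-B-2} and the corresponding step of the paper); and the spike computation giving $B_{p,a}\ge 1$ for $p\ge 2$, which matches Lemma~\ref{lem:Av1}.

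The genuine gap is your upper bound for $p\ge 2$. Taking the \emph{pointwise minimum} of two admissible majorants is the wrong lattice operation: in the framework of Proposition~\ref{prop:majorize} the majorant must have the exact form $U=D\,|y|^{p-2}y(\frac{p-1-a}{p}y-x)$ so that Lemma~\ref{lem:int_by_parts} makes $\int U(f,Hf)\,t^a\,dt$ vanish identically. If $V\le\min\{U_1,U_2\}$ then a fortiori $V\le U_1$, so the minimum buys nothing over a single majorant; whereas if each $U_i$ dominates $V$ only on part of $[0,\infty)$ (which is the situation you are trying to repair), the function dominating $V$ everywhere is $\max\{U_1,U_2\}$, and $\int\max\{U_1,U_2\}(f,Hf)\,t^a\,dt\ge 0$ has the wrong sign; convexity of $|y-x|^p$ does not rescue this. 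The paper avoids any patching: it first locates the crossover value $\aBA=\aBA(p)$ with $A_{p,\aBA}=1$ (Remark~\ref{rem:transition}); for $a>\aBA$ it shows (Remarks~\ref{rem:B=A-easy} and~\ref{rem:B=A-harder}) that the \emph{single} linear majorant from the proof of Theorem~\ref{thm:A} already dominates $v$ on all of $[0,\infty)$, because $\sup_{\beta>0}k_{p,a}(0,\beta)=\max\{k_{p,a}(0,\betaA),1\}=A_{p,a}^p$ once $A_{p,a}\ge 1$, giving $B_{p,a}=A_{p,a}$; for $a\le\aBA$ it uses the majorization at $a=\aBA$ with $K=1$ together with the monotonicity $u(x,a)\ge u(x,\aBA)$ for $x\ge 0$ to conclude $B_{p,a}\le 1$. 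A secondary, smaller gap: in the case $1<p<2$, $p-2<a<p-1$, your plan simply asserts that the unrestricted supremum defining $C_{p,a}$ is approached by pairs with $\alpha\ge 0$; that assertion is the entire content of the case, and the paper proves it in the reverse direction --- it builds the majorant from the \emph{restricted} supremum (Lemma~\ref{lem:sup-k-BC}) and then shows, via the concave--convex--concave structure of $v$ (Lemma~\ref{lem:ccc}) and the double-tangent Lemma~\ref{lem:rolle}, that this majorization extends to all of $\RR$, whence $C_{p,a}\le B_{p,a}$. You would need to supply an argument of comparable substance there.
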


Note that for $p=2$ we simply have  $B_{2,a} = A_{2,a} = (\frac{1+a}{1-a})^2 > 1$ if
 $1>a >0$ and $B_{2,a} =  1$ if $a \leq 0$. 
Similarly, one can pinpoint when $B_{p,a} = A_{p,a}$ and when $B_{p,a} =  1$ for $p>2$, see Remark~\ref{rem:transition} below.
Let us also remark that for   $1<p<2$ and  $a$  sufficiently close to $p-1$ we have in fact 
\[
 B_{p,a}^p=C_{p,a}^p= \Bigl(\frac{1+a}{p-1-a}\Bigr)^p,
 \]
while for $a< p-2$, $B_{p,a}<C_{p,a}$.

We conclude with some comments about sharp comparison of $L^p$-norms of the Hardy operator and its dual. 
The following corollary to Theorem~\ref{thm:A} extends the results obtained by Kolyada \cite{MR3180926} to the case of power weights.

\begin{corollary}
\label{cor:E}
 For $1\leq p < \infty$, $-1< a< p-1$, and any nonnegative $f\colon [0,\infty)\to [0,\infty)$,
  \begin{equation}
  \label{eq:cor:E}
  A_{p,a}^{-p} \int_0^\infty |Hf(t)|^p t^a dt 
  \leq \int_0^\infty |H^* f(t)|^p t^a dt 
  \leq A_{p,p-2-a}^p \int_0^\infty |Hf(t)|^p t^a dt.
  \end{equation}
The constants are best possible.
\end{corollary}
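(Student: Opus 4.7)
The plan is to deduce both estimates in \eqref{eq:cor:E} directly from Theorem~\ref{thm:A} via two identities. The decisive one is that $Hf = (H-I)(H^*f)$ for every nonnegative $f$, which is an immediate consequence of Fubini:
\[
 H(H^*f)(t) = \frac{1}{t}\int_0^t\!\int_s^\infty\frac{f(u)}{u}\,du\,ds = \frac{1}{t}\int_0^\infty \frac{f(u)}{u}\min(t,u)\,du = Hf(t) + H^*f(t).
\]
Observing that $h := H^*f$ is automatically positive and decreasing, Theorem~\ref{thm:A} applied to $h$ yields
\[
 \|Hf\|_{L^p(t^a dt)} = \|(H-I)h\|_{L^p(t^a dt)} \leq A_{p,a}\|h\|_{L^p(t^a dt)} = A_{p,a}\|H^*f\|_{L^p(t^a dt)},
\]
which is precisely the lower bound in \eqref{eq:cor:E}.

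For the upper bound I use the inversion $\tilde{f}(s) := f(1/s)/s$. A direct change of variables $u = 1/s$ in the defining integrals produces the pointwise identities $tHf(t) = H^*\tilde{f}(1/t)$ and $tH^*f(t) = H\tilde{f}(1/t)$; a further substitution $u = 1/t$ inside the weighted $L^p$-integrals then gives
\[
 \|Hf\|_{L^p(t^a dt)} = \|H^*\tilde{f}\|_{L^p(t^{p-2-a} dt)}, \qquad \|H^*f\|_{L^p(t^a dt)} = \|H\tilde{f}\|_{L^p(t^{p-2-a} dt)}.
\]
Since $-1 < a < p-1$ entails $b := p-2-a \in (-1, p-1)$, applying the already-proved lower bound to $\tilde{f}$ with exponent $b$ yields $\|H^*f\|_{L^p(t^a dt)} \leq A_{p,b}\|Hf\|_{L^p(t^a dt)}$, which is the upper bound with constant $A_{p,p-2-a}$.

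Sharpness of both constants is inherited from the sharpness of $A_{p,a}$ and $A_{p,p-2-a}$ in Theorem~\ref{thm:A}. Indeed, given smooth positive nonincreasing $h_n$ with $h_n'$ compactly supported in $(0,\infty)$ and with $\|(H-I)h_n\|_{L^p(t^a dt)}/\|h_n\|_{L^p(t^a dt)} \to A_{p,a}$, the nonnegative functions $f_n := -th_n'$ are compactly supported in $(0,\infty)$ and satisfy $H^*f_n = h_n$ and $Hf_n = (H-I)h_n$, so they extremize the lower bound; sharpness of the upper bound follows analogously by applying the inversion $f \mapsto \tilde{f}$. The main technical point I anticipate is checking that suitable such $h_n$ actually exist, i.e.\ that the extremizers underlying Theorem~\ref{thm:A} can be approximated by smooth, nonincreasing, compactly supported functions; this should be a routine smoothing argument and does not affect the constants.
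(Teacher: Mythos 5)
Your proof is correct, and for the right-hand inequality it takes a genuinely different route from the paper. The left-hand bound is exactly the paper's argument: $\varphi=H^*f$ is nonincreasing, Fubini gives $H\varphi-\varphi=Hf$, and Theorem~\ref{thm:A} applies (with the trivial remark that the inequality holds automatically when $\int_0^\infty|H^*f|^pt^a\,dt=\infty$). For the right-hand bound the paper does \emph{not} invert: it proves a separate ``reversed'' estimate (Lemma~\ref{thm:A-reversed}), i.e.\ $\int|\varphi|^pt^a\,dt\le A_{p,p-2-a}^p\int|H\varphi-\varphi|^pt^a\,dt$ for decreasing $\varphi$ vanishing at infinity, by redoing the majorization of Proposition~\ref{prop:majorize} after the substitution $x\mapsto 1-x$ (which is where the dual exponent $p-2-a$ enters), together with integrability checks needed to legitimize the integration by parts; it then applies this to $\varphi=H^*f$ after verifying via H\"older that $H^*(Hf)$ is finite. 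Your inversion $\tilde f(s)=f(1/s)/s$ replaces all of that: the identities $H\tilde f(1/t)=tH^*f(t)$, $H^*\tilde f(1/t)=tHf(t)$ and the change of variables $t\mapsto 1/t$ show that the upper bound at weight $t^a$ \emph{is} the lower bound at the dual weight $t^{p-2-a}$, so no new pointwise inequality and no separate integrability discussion are needed (the degenerate cases are absorbed into the already-proved lower bound). This is shorter and also explains structurally why $A_{p,p-2-a}$ appears; what the paper's route buys instead is the standalone Lemma~\ref{thm:A-reversed}, which it states for its own sake. Your sharpness argument coincides with the paper's ($f=-t\varphi'$ gives $H^*f=\varphi$, $Hf=(H-I)\varphi$, and sharpness of the upper constant follows by the same inversion); the approximation issue you flag is real but routine: the near-extremizers $f_{\alpha,\beta}$ from \eqref{eq:f-ab} have a jump at $t=1$, a non-compact tail $\alpha t^{\alpha-1}$, and (when $\betaA<1$) a singularity at $0$, and one checks that smoothing the jump, truncating at large $R$, and capping near $0$ perturb both norms negligibly precisely because $\alpha<\frac{p-1-a}{p}<\beta$; the paper is equally terse here, referring to Kolyada.
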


In general, such a two-sided comparison does not hold for not necessarily nonnegative functions and all $1\leq p<\infty$, $a<p-1$, but we have the following sharp inequality. Below it is natural to assume that
\begin{equation*}
 H\bigl(|f|\bigr)(1) < \infty , \quad  H^*\bigl(|f|\bigr)(1) < \infty,
\end{equation*}
so that $Hf$ and $H^*f$ are well defined.

\begin{proposition}
\label{prop:F}
 For $1\leq p < \infty$, $a< p-1$, 
 and any $f\colon [0,\infty)\to \RR$,
  \begin{equation}
    \label{eq:prop:F}
  \int_0^\infty |Hf(t)|^p t^a dt 
  \leq C_{p,a}^{p}  \int_0^\infty |H^* f(t)|^p t^a dt.
  \end{equation}
The constant is best possible.
\end{proposition}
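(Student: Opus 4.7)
The plan is to reduce the proposition to Theorem~\ref{thm:C} via the operator identity
\[
Hf = (H-I)\bigl(H^*f\bigr),
\]
which I would verify by Fubini's theorem. Writing $g \coloneqq H^*f$ and swapping the order of integration,
\[
Hg(t) = \frac{1}{t}\int_0^t \int_u^\infty \frac{f(s)}{s}\, ds\, du = \frac{1}{t}\int_0^\infty \frac{f(s)}{s} \min(s,t)\, ds = Hf(t) + H^*f(t),
\]
so $(H-I)g = Hf$ almost everywhere. The standing hypotheses $H(|f|)(1), H^*(|f|)(1) < \infty$ are exactly what is needed to justify Fubini and to ensure local integrability of $g$.

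Given this identity, the upper bound in \eqref{eq:prop:F} is immediate. If the right-hand side is infinite there is nothing to prove; otherwise Theorem~\ref{thm:C} applied to $g$ gives
\[
\int_0^\infty |Hf(t)|^p t^a\, dt = \int_0^\infty |(H-I)g(t)|^p t^a\, dt \leq C_{p,a}^p \int_0^\infty |H^*f(t)|^p t^a\, dt.
\]

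For sharpness, I would exploit the fact that every $g \in C_c^\infty((0,\infty))$ is the image of some admissible $f$ under $H^*$: setting $f(t) \coloneqq -tg'(t)$ gives a bounded, compactly supported function, and since $g(\infty) = 0$ a direct computation yields $H^*f = g$. Consequently, as $g$ ranges over $C_c^\infty((0,\infty))$, the ratio
\[
\frac{\int_0^\infty |Hf(t)|^p t^a\,dt}{\int_0^\infty |H^*f(t)|^p t^a\,dt}
= \frac{\int_0^\infty|(H-I)g(t)|^p t^a\,dt}{\int_0^\infty |g(t)|^p t^a\,dt}
\]
coincides with the ratio appearing in Theorem~\ref{thm:C}; the sharpness of $C_{p,a}$ there, combined with the density of $C_c^\infty((0,\infty))$ in $L^p([0,\infty), t^a\,dt)$ and the boundedness of $H-I$ on that space, shows this ratio can be made arbitrarily close to $C_{p,a}^p$. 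The only step requiring any real care is the derivation of the operator identity for functions that need not be nonnegative or decreasing; once this is in hand, everything else is a routine density argument.
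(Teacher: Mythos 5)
Your argument is correct, and the upper bound is obtained exactly as in the paper: the Fubini identity $H(H^*f)=Hf+H^*f$, i.e.\ $(H-I)(H^*f)=Hf$, followed by an application of Theorem~\ref{thm:C} to $g=H^*f$ (this is the reasoning of Corollary~\ref{cor:E} and of Lemma~1.2 in Boza--Soria). Where you genuinely diverge is the sharpness part. The paper exhibits an explicit family: it regularizes the extremal functions \eqref{eq:f-ab} from Lemma~\ref{lem:lower_bound} by replacing the jump at $t=1$ with the steep piece $n(\alpha-\beta)\ind{[1-1/n,1]}$, computes $\lim_n\int_0^\infty|Hf_n|^pt^a\,dt$ and $\lim_n\int_0^\infty|H^*f_n|^pt^a\,dt$ directly, recognizes the ratio as $k_{p,a}(\alpha,\beta)$, and then takes the supremum via Theorem~\ref{thm:C}. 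You instead argue abstractly: every $g\in C_c^\infty((0,\infty))$ equals $H^*f$ for the admissible $f=-tg'$ (and indeed $Hf=(H-I)g$, which for such $f$ follows from a one-line integration by parts, no Fubini needed), and since $H-I$ is bounded on $L^p(t^a\,dt)$ with operator norm exactly $C_{p,a}$, density of $C_c^\infty((0,\infty))$ in that space lets the ratio over this class approach $C_{p,a}^p$. This is a valid alternative: it is shorter and avoids the limit computations, at the price of invoking the operator-norm characterization of $C_{p,a}$ together with density in the weighted space; to make it airtight you should record that $C_{p,a}<\infty$ (e.g.\ from Hardy's inequality \eqref{eq:Hardy} and the triangle inequality, also for $p=1$, $a<0$), so that the continuity of $g\mapsto\|(H-I)g\|_{L^p(t^a)}/\|g\|_{L^p(t^a)}$ used in the density step is legitimate, and that $f=-tg'$ is bounded with compact support in $(0,\infty)$, hence admissible. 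The paper's route, by contrast, keeps the near-extremizers completely explicit and ties them to the same family $f_{\alpha,\beta}$ used for all the lower bounds in the paper.
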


We remark that the observation that for $a=0$ the best constant in the inequality~\eqref{eq:prop:F} is less or equal than $C_{p,0}$ was made by Boza and Soria \cite{MR3868629} and that their argument extends verbatim to the setting of power weights; our contribution is showing that this is in fact best possible.

Finally, let us mention a related open problem, posed in~\cite{MR3868629}, concerning the value of the best constant $D_{p,a}$ such that the inequality
\begin{equation}
  \label{eq:def-D}
 \int_0^\infty |Hf(t)|^p t^a dt \leq D_{p,a}^{p} \int_0^\infty |H^* f(t)|^p t^a dt
  \end{equation}
holds for all positive and  decreasing functions $f\colon [0,\infty)\to\RR$. It is known \cite{MR3868629} that $D_{p,0} = \frac{1}{p-1}$ for $1<p\leq 2$ and 
\[
 D_{p,0} = \Bigl(\frac{p}{(p-1) \Gamma(p+1)} \Bigr)^{1/p}
\]
for \emph{integer} $p\geq 2$, while  for general $p\geq 2$,
\[
 \Bigl(\frac{p}{(p-1) \Gamma(p+1)} \Bigr)^{1/p}\leq D_{p,0} \leq \min\Bigl\{ \frac{e}{p-1}, \frac{1}{(p-1)^{1/p}}\Bigr\}
\]
(those inequalities are asymptotically sharp).  

The organization of the rest of the paper is the following. In Section~\ref{sec:prelim} we provide some preliminary results and describe the method of the proof of Theorems~\ref{thm:C}, \ref{thm:A}, and \ref{thm:B}. The proofs of Theorems  \ref{thm:A} and \ref{thm:B} for $p>2$ and $a=0$ are also presented therein.

The proofs of Theorems~\ref{thm:C}, \ref{thm:A}, and \ref{thm:B} (and Remark~\ref{rem:transition}) in their full generality  are presented in Sections~\ref{sec:C}, \ref{sec:A},  \ref{sec:B} respectively.

In Section~\ref{sec:Hstar} we prove Corollary~\ref{cor:E} and Proposition~\ref{prop:F}.

\section{Preliminaries}
\label{sec:prelim}

\subsection{Lower bounds for best constants}

We first adapt the construction of the extremal family from \cite{MR3558516} to our setting in order to get lower bounds for $B_{p,a}$ and $A_{p,a}$.
Recall that $k_{p,a}(\alpha,\beta)$ is defined by~\eqref{eq:def_k}.  

\begin{lemma}
\label{lem:lower_bound}
For $1\leq p < \infty$  and $a< p-1$,
 \begin{align*}
C_{p,a}^p &\geq \sup\Big\{ k_{p,a}(\alpha, \beta)    : \ \alpha < \frac{p-1-a}{p} < \beta\Big\},\\
B_{p,a}^p &\geq \sup\Big\{ k_{p,a}(\alpha, \beta) : \ 0\leq \alpha < \frac{p-1-a}{p} < \beta\Big\},\\
A_{p,a}^p &\geq \sup\Big\{ k_{p,a}(\alpha, \beta) : \ 0\leq \alpha < \frac{p-1-a}{p} < \beta\leq 1\Big\}.
\end{align*}
\end{lemma}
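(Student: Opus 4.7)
The plan is to exhibit, for every pair $(\alpha,\beta)$ with $\alpha < \phi < \beta$ where $\phi\coloneqq(p-1-a)/p$, an explicit test function whose ratio $\int_0^\infty|Hf-f|^p t^a\,dt \,/\, \int_0^\infty|f|^p t^a\,dt$ equals $k_{p,a}(\alpha,\beta)$. The three stated lower bounds will then follow by reading off which additional restrictions on $(\alpha,\beta)$ keep this test function inside each of the three classes.

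The natural candidate, suggested by requiring that $Hf/f$ be constant on two pieces joined at $t=1$, is
\[
 f_{\alpha,\beta}(t) \coloneqq t^{\beta-1}\indbr{0<t\leq 1} + \tfrac{\alpha}{\beta}\, t^{\alpha-1}\indbr{t>1}.
\]
The factor $\alpha/\beta$ on the second piece is dictated by the requirement that the boundary term $\frac{1}{t}\int_0^1 f$ cancel against the constant of integration produced by $\int_1^t f$ on $(1,\infty)$; a direct computation then yields
\[
 Hf_{\alpha,\beta}(t) = \tfrac{1}{\beta}\,t^{\beta-1}\indbr{0<t\leq 1} + \tfrac{1}{\beta}\, t^{\alpha-1}\indbr{t>1},
\]
and therefore $(Hf_{\alpha,\beta}-f_{\alpha,\beta})(t)$ equals $\frac{1-\beta}{\beta}t^{\beta-1}$ on $(0,1]$ and $\frac{1-\alpha}{\beta}t^{\alpha-1}$ on $(1,\infty)$.

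With these explicit expressions the two $L^p(t^a\,dt)$-norms reduce to four elementary power integrals. The condition $\beta>\phi$ is exactly what makes the integral near $0$ finite, and $\alpha<\phi$ is exactly what makes the integral at $\infty$ finite. After pulling out the common factor $\bigl(p|\beta|^p(\beta-\phi)(\phi-\alpha)\bigr)^{-1}$ from both norms, one obtains
\begin{equation*}
 \frac{\int_0^\infty |Hf_{\alpha,\beta}-f_{\alpha,\beta}|^p t^a\,dt}{\int_0^\infty |f_{\alpha,\beta}|^p t^a\,dt}
 = \frac{(\beta-\phi)|\alpha-1|^p + (\phi-\alpha)|\beta-1|^p}{(\beta-\phi)|\alpha|^p + (\phi-\alpha)|\beta|^p}
 = k_{p,a}(\alpha,\beta),
\end{equation*}
which proves the bound for $C_{p,a}^p$.

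For the remaining two bounds it remains only to match side constraints to classes. Since $\beta>\phi>0$, positivity of $f_{\alpha,\beta}$ is equivalent to $\alpha\geq 0$, yielding the admissible set in the stated bound for $B_{p,a}$. For $A_{p,a}$ one additionally needs $f_{\alpha,\beta}$ non-increasing: $t^{\beta-1}$ is non-increasing on $(0,1]$ iff $\beta\leq 1$; $t^{\alpha-1}$ is non-increasing on $(1,\infty)$ iff $\alpha\leq 1$, which is automatic from $\alpha<\phi\leq 1$ (valid since $a>-1$); and the jump inequality $1=f_{\alpha,\beta}(1^-)\geq \alpha/\beta=f_{\alpha,\beta}(1^+)$ is automatic from $\alpha<\beta$. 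So the admissible set reduces to $0\leq\alpha<\phi<\beta\leq 1$. No genuine obstacle arises: the calculation is a direct verification, driven entirely by the right ansatz. The only mildly delicate point is the limiting case $\alpha=0$, where $f_{0,\beta}=t^{\beta-1}\indbr{0<t\leq 1}$ has to be handled directly (or by continuity in $\alpha$), but the displayed formulas extend unchanged.
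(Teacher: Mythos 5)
Your proof is correct and takes essentially the same route as the paper: the paper's extremal family is $f_{\alpha,\beta}(t)=\beta t^{\beta-1}\indbr{t\in[0,1)}+\alpha t^{\alpha-1}\indbr{t\in[1,\infty)}$, which is exactly your test function multiplied by $\beta$, so the same four power integrals yield the ratio $k_{p,a}(\alpha,\beta)$ and the same side constraints on $(\alpha,\beta)$ identify the three classes. Your observations about convergence ($\beta>\tfrac{p-1-a}{p}$ near $0$, $\alpha<\tfrac{p-1-a}{p}$ at $\infty$) and the boundary case $\alpha=0$ are accurate.
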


Note that in  case of $A_{p,a}$ the assertion is trivial unless $a>-1$. Later on we shall verify that there are equalities in Lemma~\ref{lem:lower_bound} and identify more explicit expressions for the constants.

\begin{proof}[Proof of Lemma~\ref{lem:lower_bound}]
Fix $1\leq p<\infty$ and $a<p-1$. 
If $\alpha<\frac{p-1-a}{p}<\beta$, then the function
\begin{equation}
\label{eq:f-ab}
f(t)= f_{\alpha,\beta}(t) = \beta t^{\beta- 1} \indbr{t\in[0,1)}+\alpha t^{\alpha-1}\indbr{t\in[1,\infty)}, \quad  t\geq 0,
\end{equation}
 clearly satisfies $\int_0^\infty |f(t)|^p t^a dt <\infty$. Moreover,
\begin{equation*}
H f(t) = t^{\beta-1} \indbr{t\in[0,1)}+ t^{\alpha-1}\indbr{t\in[1,\infty)}
\end{equation*}
and
\begin{align*}
\int_0^\infty |Hf(t) - f(t)|^p t^a dt &= \frac{|\beta-1|^p}{p(\beta-1)+a+1} -\frac{|\alpha-1|^p}{p(\alpha-1)+a+1},\\
\int_0^\infty |f(t)|^p t^a dt &= \frac{|\beta|^p}{p(\beta-1)+a+1} -\frac{|\alpha|^p}{p(\alpha-1)+a+1}.
\end{align*}
Considering the ratio of those two quantities and taking the supremum over all  $\alpha<\frac{p-1-a}{p}<\beta$ yields the lower bound for $C_{p,a}^p $. 
In order to estimate $B_{p,a}^p$ (resp.\ $A_{p,a}^p$) we have to restrict the admissible parameters $\alpha$, $\beta$ to nonnegative numbers (resp.\ numbers in $[0,1]$), in order for the function $f$ to be positive (resp.\ positive and decreasing).
\end{proof}

\subsection{Method of the proof}
It is more demanding to find good upper bounds for the constants $C_{p,a}$, $B_{p,a}$, and $A_{p,a}$.

First of all, we need the following classical lemma (which combined with H\"older's inequality can be used to prove Hardy's inequality~\eqref{eq:Hardy}).
  
\begin{lemma} 
\label{lem:int_by_parts}
Suppose that $f\colon [0,\infty)\to\RR$ satisfies $\int_0^\infty |f(t)|^p t^a dt<\infty$ for some $1\leq p< \infty$ and $a<p-1$. Then
\begin{equation}
 \label{eq:int_by_parts}
 \int_0^\infty |Hf(t)|^p t^a dt=\frac{p}{p-1-a} \int_0^\infty |Hf(t)|^{p-2} Hf(t)f(t)  t^a dt
\end{equation}
(and both integrals are finite).  
\end{lemma}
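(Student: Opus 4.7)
The plan is to reduce the identity to an integration by parts on a carefully chosen pair of functions. Set $F(t) \coloneqq tHf(t) = \int_0^t f(s)\,ds$, so that $|Hf(t)|^p t^a = |F(t)|^p t^{a-p}$, and apply integration by parts with $u = |F(t)|^p$ and $dv = t^{a-p}\,dt$. Since $a<p-1$, the antiderivative $-t^{a-p+1}/(p-1-a)$ is well defined, and $du = p\,|F|^{p-2}F\,f\,dt$ a.e. Using $t\geq 0$ one has the pointwise identity $|F|^{p-2}F = t^{p-1}|Hf|^{p-2}Hf$, so $t^{a-p+1}|F|^{p-2}F = t^a |Hf|^{p-2}Hf$, and the non-boundary part of the integration by parts formula is exactly the right-hand side of~\eqref{eq:int_by_parts}. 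For $p=1$ the same argument applies with $|F|^{p-2}F$ interpreted as $\operatorname{sgn}(F)$. It then remains to (i) verify finiteness of both integrals, and (ii) check that the boundary terms $|F(t)|^p t^{a-p+1}$ vanish at $t=0$ and as $t\to\infty$.

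Finiteness is standard: $|Hf|\leq H|f|$ combined with Hardy's inequality~\eqref{eq:Hardy} applied to $|f|$ gives $\int_0^\infty|Hf|^p t^a\,dt \leq \bigl(\tfrac{p}{p-1-a}\bigr)^p \int_0^\infty|f|^p t^a\,dt <\infty$, after which H\"older's inequality bounds the right-hand side of~\eqref{eq:int_by_parts} by $\bigl(\int|Hf|^p t^a\bigr)^{(p-1)/p}\bigl(\int|f|^p t^a\bigr)^{1/p}$. For the boundary at $0$, writing $f(s) = s^{-a/p}\cdot s^{a/p}f(s)$ and applying H\"older's inequality with exponents $p$ and $p/(p-1)$ (a direct estimate handles $p=1$), together with $a<p-1$, yields
\begin{equation*}
|F(t)|^p \leq \Bigl(\int_0^t s^{-a/(p-1)}\,ds\Bigr)^{p-1}\int_0^t |f|^p s^a\,ds = c_{p,a}\,t^{p-1-a}\int_0^t |f|^p s^a\,ds,
\end{equation*}
so $|F(t)|^p t^{a-p+1} \leq c_{p,a}\int_0^t|f|^p s^a\,ds \to 0$ as $t\to 0^+$ by dominated convergence.

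The boundary at $\infty$ is the main technical point, since the estimate above only produces $O(1)$ as $t\to\infty$. To handle it, fix $T>0$ and split $F(t) = \int_0^T f + \int_T^t f$. The first summand is a constant, so its contribution vanishes because $a-p+1<0$. For the second, the same H\"older bound applied on $[T,t]$ (with $\int_T^t s^{-a/(p-1)}\,ds \leq C\,t^{(p-1-a)/(p-1)}$) gives $\bigl|\int_T^t f\bigr|^p t^{a-p+1} \leq C\int_T^\infty|f|^p s^a\,ds$. Taking $\limsup_{t\to\infty}$ and then letting $T\to\infty$ kills the upper boundary term and completes the proof. The main obstacle is exactly this $\infty$-boundary verification; everything else is routine Hardy/H\"older bookkeeping.
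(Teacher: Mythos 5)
Your proof is correct and follows essentially the same route as the paper: the same integration by parts for $|F(t)|^p t^{a-p}$ with $F(t)=\int_0^t f$, the boundary term at $0$ controlled by H\"older's inequality on $[0,t]$, and the boundary term at $\infty$ handled by splitting at a large threshold $T$ and applying H\"older on $[T,t]$ (the paper's $\xi$). The only cosmetic difference is that you deduce finiteness of both integrals from the classical Hardy inequality \eqref{eq:Hardy} plus H\"older (legitimate, since that inequality is quoted as known, even though the paper notes this lemma can itself be used to prove it), whereas the paper obtains finiteness by first integrating by parts on finite intervals and then applying H\"older.
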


For the reader's convenience we provide the proof from \cite[Chapter 1]{MR1963498}.

\begin{proof}[Proof of Lemma~\ref{lem:int_by_parts}]
We may suppose that $f\not\equiv 0$. Denote $F(t) = \int_0^t f(s) ds $. 
 By H\"older's inequality (with obvious changes for $p=1$),
\[
 \int_0^t |f(s)|  ds 
 \leq
 \Bigl(  \int_0^t |f(s)|^p s^{a} ds \Bigr)^{1/p}
 \Bigl(  \int_0^t   s^{-a/(p-1)} ds \Bigr)^{(p-1)/p}.
\]
Thus, $F(t)  = o(t^{(p-1-a)/p})$ for $t\to 0^+$.

Fix any $\varepsilon>0$.
Applying the same argument as above on the interval $[\xi,t]$ with $\xi=\xi(\varepsilon)$ large enough, we see that
\[
 F(t) - F(\xi) \leq \varepsilon t^{(p-1-a)/p}.
\]
As the exponent on the right hand side is positive, we conclude that for $t$ large enough, $F(t)\leq 2 \varepsilon t^{(p-1-a)/p}$. Hence, $F(t)  = o(t^{(p-1-a)/p})$ for $t\to \infty$. 
 
Therefore, integration by parts yields
\begin{align*}
  \int_0^\infty |Hf(t)|^p t^a dt
  &=   \int_0^\infty |F(t)|^p t^{a-p} dt \\
   &= \Bigl[-\frac{1}{p-1-a} |F(t)|^p t^{a+1-p} \Bigr]^\infty_0  \\
   &\qquad + \frac{p}{p-1-a} \int_0^\infty  |F(t)|^{p-2} F(t) f(t) t^{1+a-p} dt\\
  &= 
  \frac{p}{p-1-a} \int_0^\infty |Hf(t)|^{p-2} Hf(t)f(t)  t^a dt.
\end{align*}
(To see that both integrals are indeed finite one can first integrate by parts on a finite interval and use H\"older's inequality).
\end{proof}

The next proposition reduces the task of proving that an inequality holds for all functions from a given class to constructing a majorant of adequate form on an appropriate domain.

\begin{proposition} 
\label{prop:majorize}
 Suppose that for some $1\leq p< \infty$, $a<p-1$ and some constants $K\in(0,\infty)$, $D\in\RR$
 the inequality
 \begin{equation*}
  |1-x|^p - K^p |x|^p \leq D \Bigl( \frac{p-1-a}{p} - x\Bigr)
 \end{equation*}
holds for all $x\in\RR$ (resp.\ for all $x\geq 0$, resp.\ for all $x\in[0,1]$).
Then the inequality
\begin{equation*}
\int_0^\infty |Hf(t) -f(t)|^p t^a dt \leq K^p \int_0^\infty |f(t)|^p t^a dt.
\end{equation*}
holds for all (resp.\ for all positive, resp.\ for all positive and decreasing) functions $f\colon [0,\infty) \to\RR$. In other words, $C_{p,a}\leq K$ (resp. $B_{p,a}\leq K$, resp. $A_{p,a}\leq K$). 
\end{proposition}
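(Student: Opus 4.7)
The plan is to apply the hypothesized inequality pointwise with the choice $x = f(t)/Hf(t)$, multiply through by $|Hf(t)|^p$, integrate against $t^a\,dt$, and observe that the resulting $D$-term on the right integrates to zero exactly by Lemma~\ref{lem:int_by_parts}.

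In detail: the assumption $\int_0^\infty |f|^p t^a\,dt < \infty$ together with Hardy's inequality \eqref{eq:Hardy} implies $\int_0^\infty |Hf|^p t^a\,dt < \infty$, so Lemma~\ref{lem:int_by_parts} is applicable. At every $t > 0$ with $Hf(t) \neq 0$, substituting $x = f(t)/Hf(t)$ into the hypothesis and multiplying by $|Hf(t)|^p$ yields the pointwise bound
\begin{equation*}
|Hf(t) - f(t)|^p - K^p |f(t)|^p \leq D\Bigl(\tfrac{p-1-a}{p}|Hf(t)|^p - f(t)\,|Hf(t)|^{p-2}Hf(t)\Bigr).
\end{equation*}
In each of the three settings one has to check that $f(t)/Hf(t)$ indeed lies in the admissible domain: for positive $f$, $Hf > 0$ gives $f/Hf \geq 0$, and if $f$ is additionally decreasing then $f \leq Hf$ forces $f/Hf \in [0,1]$.

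Integrating against $t^a\,dt$ and applying Lemma~\ref{lem:int_by_parts} in the form
\begin{equation*}
\int_0^\infty f\,|Hf|^{p-2}Hf\, t^a\,dt \;=\; \tfrac{p-1-a}{p}\int_0^\infty |Hf|^p t^a\,dt,
\end{equation*}
the right-hand side of the integrated bound telescopes to zero and the desired $L^p$-estimate drops out. The only technical point is the set $E := \{t>0 : Hf(t) = 0\}$: since $F(t) = \int_0^t f$ is absolutely continuous and vanishes on $E$, one has $f = F' = 0$ almost everywhere on $E$, so both sides of the pointwise bound vanish a.e.\ on $E$ and the integration argument is unaffected. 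I expect no real obstacle beyond this mild measure-theoretic bookkeeping; once Lemma~\ref{lem:int_by_parts} is in place, the rest is a substitution.
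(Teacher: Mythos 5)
Your argument is correct and is essentially the paper's own proof: substituting $x=f(t)/Hf(t)$ and multiplying by $|Hf(t)|^p$ is exactly the homogeneity step by which the paper passes from the one-variable hypothesis to the majorization $V(f(t),Hf(t))\leq U(f(t),Hf(t))$, after which integration and Lemma~\ref{lem:int_by_parts} kill the $D$-term. Your explicit treatment of the set $\{Hf=0\}$ (where $f=0$ a.e.) is a harmless refinement of what the paper handles implicitly by stating $V\leq U$ on the whole domain.
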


\begin{proof}
Denote $\mathcal{D}\coloneqq \RR^2$ (resp.\ $\mathcal{D}\coloneqq \{(x,y)\in\RR^2 : x,y\geq 0 \} $, resp.\ $\mathcal{D}\coloneqq \{(x,y)\in\RR^2 : y\geq x \geq 0 \} $).
For $(x,y)\in \mathcal{D}$ let
\begin{align*}
 V(x,y) &\coloneqq   |y-x|^p - K^p |x|^p,\\
 U(x,y) &\coloneqq D |y|^{p-2}y\Bigl( \frac{p-1-a}{p}y - x\Bigr). 
\end{align*}
By our hypothesis and homogeneity, $V\leq U$ on $\mathcal{D}$.

Let $f\colon [0,\infty) \to\RR$  be any (resp.\ any positive, resp.\ any positive and decreasing) function such that $\int_0^\infty |f(t)|^p t^a dt < \infty$. Clearly if $f$ is positive, so is $Hf$; if $f$ is moreover decreasing, then $Hf\geq f$. Thus, $(f(t),Hf(t))\in \mathcal{D}$ and  by Lemma~\ref{lem:int_by_parts},
\begin{equation*}
 \int_0^\infty V(f(t),Hf(t)) t^a dt \leq  \int_0^\infty U(f(t),Hf(t)) t^a dt = 0. 
\end{equation*}
This ends the proof.
\end{proof}

\subsection{Warm-up: the unweighted setting}
Nearly all proofs below are based on the scheme outlined in Proposition~\ref{prop:majorize}. Some calculations or educated guesses are needed to find the right candidate for the optimal constant $K$, but apart from that the proofs involve only elementary calculations. Nonetheless, the consideration of a number of different inequalities and the presence of two parameters, $p$ and $a$, forces us to split the reasoning into several cases and yields some complications. 
Therefore, we believe it is instructive to present two proofs from the unweighted setting in this place. 

For $a=0$ and $1<p\leq 2$ we have $C_{p,0} = (p-1)^{-1}$, see~\cite{MR2595549}. Since the extremal functions can be chosen to be positive and decreasing we in fact have   $A_{p,0} = B_{p,0} = C_{p,0} = (p-1)^{-1}$. 
For $p>2$ the situation is more interesting, as $A_{p,0} = (p-1)^{-1/p}  < B_{p,0} = 1 < C_{p,0}$, see~\cite{MR2747011,MR3868629,MR3558516}. 
Below we explain how one can use Proposition~\ref{prop:majorize} to guess the values of $A_{p,0}$ and  $B_{p,0}$ to begin with and then confirm this guess.

Suppose that we do not yet know the value of $A_{p,0}$. 
In order to use  Proposition~\ref{prop:majorize} we need to find a candidate $K=K(p)$ for $A_{p,0}$ and some constant $D=D(K,p)$ such that
\begin{align*}
 v(x) &= |1-x|^p -  K^p |x|^p,\\
 u(x) &= D \bigl(\frac{p-1}{p} - x\bigr)
\end{align*}
satisfy $v(x)\leq u(x)$ for $x\in[0,1]$. In particular, the conditions  $v(0)\leq (0)$ and $v(1)\leq u(1)$ yield $D\geq p/(p-1)$ and $K^p\geq D/p$, respectively.
This implies $K \geq (p-1)^{-1/p}$.

If on the other hand we  want to find the value of $B_{p,0}$, we need to assure that, for an appropriate, possibly different choice of $K$ and $D$, $v(x)\leq u(x)$ for all $x\geq 0$. In particular, the conditions  $v(0)\leq (0)$ and $\lim_{x\to\infty } v(x)/x^p\leq \lim_{x\to\infty } u(x)/x^p$ yield $D\geq p/(p-1)$ and $K\geq 1$, respectively.

It turns out that these lower bounds---which are imposed solely by our method of proof---are also the correct upper bounds.

\begin{proof}[Proof of Theorem~\ref{thm:A} for $p>2$ and $a=0$]
For $x\in[0,1]$ define
\begin{align*}
 A &=  \frac{1}{(p-1)^{1/p}},\\
 v(x) &= |1-x|^p -  \frac{1}{p-1} |x|^p,\\
 u(x) &= \frac{p}{p-1} \bigl(\frac{p-1}{p} - x\bigr).
\end{align*}
Observe first that $A<1$, $u$ is the tangent to $v$ at $x=0$, and $v(1)=u(1)$.
Moreover,  a direct calculation of $v''$ shows that $v$ is convex on $[0,1/(1+A^{p/(p-2)})]$ and concave on $[1/(1+A^{p/(p-2)}),1]$ (cf.\ Lemma~\ref{lem:ccc} below). It follows from those properties that $v\leq u$ on $[0,1]$.
 Thus, by Proposition~\ref{prop:majorize},  $A_{p,0}\leq A$. Since by Lemma~\ref{lem:lower_bound} also $A_{p,0}\geq k_{p,0}(0,1) ^{1/p} = A$, we conclude that  $A_{p,0}=A$. This ends the proof.
\end{proof}

\begin{proof}[Proof of Theorem~\ref{thm:B} for $p>2$ and $a=0$]
This time let us define
\begin{align*}
 v(x) &= |1-x|^p - |x|^p,\\
 u(x) &= \frac{p}{p-1} \bigl(\frac{p-1}{p} - x\bigr)
\end{align*}
for $x \geq 0$.
Clearly, $v$ is decreasing on $[0,\infty)$.

Moreover, $v$ convex on $[0,1/2]$,  $v(0)=u(0)$, and $v(1/2)=0 < u(1/2)$. Hence $v\leq u$ on $[0,1/2]$.

Since on $(1/2,(p-1)/p)$, $v$ is negative and $u$ positive, we have $v\leq u$ on $[1/2,(p-1)/p]$. Let us  for now the following claim for granted:
\begin{equation}
 \label{eq:B_p-zero:claim}
 v'((p-1)/p) \leq u'((p-1)/p).
\end{equation}
Since on the interval $[1/2,\infty)$ the function $v$ is concave,  \eqref{eq:B_p-zero:claim} implies that $v\leq u$ also on $[(p-1)/p,\infty)$.
Thus, by Proposition~\ref{prop:majorize},  $B_{p,0}\leq 1$. Since, by Lemma~\ref{lem:lower_bound},
\[
 B_{p,0}\geq \lim_{\beta \to \infty }k_{p,0}(0,\beta) ^{1/p} = 1,
\]
we conclude that $B_{p,0}=1$. 
It remains to check the claim \eqref{eq:B_p-zero:claim}, but apart from that the proof is finished.
\end{proof}

Even though  it is not hard to convince oneself, e.g., numerically that \eqref{eq:B_p-zero:claim} holds true, proving this is somewhat cumbersome. We present one possible approach.  

\begin{proof}[Proof of the claim]
 The inequality \eqref{eq:B_p-zero:claim} is equivalent to 
 \begin{equation}
  \label{eq:B_p-zero:claim:equiv}
  p^{p-1}\leq p-1 + (p-1)^p.
 \end{equation}
By the inequality between the weighted arithmetic mean and the weighted geometric mean,
\[
 p-1 + (p-1)^p \geq \frac{1}{\theta^\theta(1-\theta)^{1-\theta}}(p-1)^{1-\theta + \theta p}, \quad \theta\in(0,1).
\]
Using this with $\theta \in \{ \frac{1}{2}, \frac{3}{4},  1^-\}$ we estimate $p-1 + (p-1)^p$ from below and reduce the task of  proving \eqref{eq:B_p-zero:claim:equiv} to  checking that
\begin{align}
\label{eq:amgm-1}
   2(p-1)^{(p+1)/2} &\geq p^{p-1} \quad \text{ for } p\in[2,2.5],\\
   \label{eq:am-gm-2}
\frac{4}{3^{3/4}}  (p-1)^{(3p+1)/4} & \geq p^{p-1}  \quad \text{ for } p\in[2.5,3.5],\\
\label{eq:am-gm-3}
 (p-1)^p  & \geq p^{p-1} \quad \text{ for } p\in[3.5,\infty).
\end{align}

Each of those inequalities holds at the endpoints. Moreover, in each case the difference of the logarithms of both sides is a concave function, since
\begin{align*}
  \Bigl( \frac{p+1}{2} \ln(p-1) - (p-1)\ln(p) \Bigr)''  =  \frac{-(2 - 2 p + p^2 + p^3)}{2 (p-1)^2 p^2}      &  \leq 0\enspace \text{ for } p\in[2,2.5],\\
    \Bigl( \frac{3p+1}{4} \ln(p-1) - (p-1)\ln(p) \Bigr)''  =  \frac{-(4 - 4 p + 3 p^2 + p^3)}{4 (p-1)^2 p^2}   &  \leq 0\enspace \text{ for } p\in[2.5,3.5],\\
  \bigl( p \ln(p-1) - (p-1)\ln(p) \bigr)''  =   \frac{(-1 + p - p^2)}{(p-1 )^2 p^2}     &        \leq 0\enspace \text{ for } p\in[3.5,\infty).
\end{align*}
Thus \eqref{eq:amgm-1}, \eqref{eq:am-gm-2}, \eqref{eq:am-gm-3} indeed hold true.
\end{proof}

 \subsection{Auxiliary lemmas}
Below are some easy technical results to be used in the proofs. 

\begin{lemma}
  \label{lem:ccc}
  For $K>0$, $p>1$, and $x\in\RR$, denote
  \[
   v(x) =  |x-1|^p-K^p|x|^p.
  \]
Then following holds. 
\begin{enumerate}[a)]
 \item For $1<p<2$ and $K>1$, $v$ is concave, convex, concave, on the intervals $(-\infty,\frac{1}{1+K^{p/(p-2)}})$, $(\frac{1}{1+K^{p/(p-2)}},\frac{1}{1-K^{p/(p-2)}})$, $(\frac{1}{1-K^{p/(p-2)}},\infty)$, respectively. 
  \item For $1<p<2$ and $0<K<1$, $v$ is convex, concave, convex on the intervals $(-\infty,\frac{1}{1-K^{p/(p-2)}})$, $(\frac{1}{1-K^{p/(p-2)}},\frac{1}{1+K^{p/(p-2)}})$, $(\frac{1}{1+K^{p/(p-2)}},\infty)$, respectively. 
   \item For $p>2$ and $K>1$, $v$ is concave, convex, concave, on the intervals $(-\infty,\frac{1}{1-K^{p/(p-2)}})$, $(\frac{1}{1-K^{p/(p-2)}},\frac{1}{1+K^{p/(p-2)}})$, $(\frac{1}{1+K^{p/(p-2)}},\infty)$, respectively. 
   \item For $p>2$ and $0<K<1$, $v$ is convex, concave, convex on the intervals $(-\infty,\frac{1}{1+K^{p/(p-2)}})$, $(\frac{1}{1+K^{p/(p-2)}},\frac{1}{1-K^{p/(p-2)}})$, $(\frac{1}{1-K^{p/(p-2)}},\infty)$, respectively. 
\end{enumerate}
\end{lemma}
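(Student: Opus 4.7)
The plan is to work directly with $v''$ where it exists and read off the convexity/concavity regions from its sign. For $x\notin\{0,1\}$ a direct computation gives
\begin{equation*}
  v''(x) = p(p-1)\bigl(|x-1|^{p-2} - K^p |x|^{p-2}\bigr),
\end{equation*}
so $\operatorname{sign}(v''(x))$ equals $\operatorname{sign}\bigl((|x-1|/|x|)^{p-2}-K^p\bigr)$. Setting $\lambda \coloneqq K^{p/(p-2)}$ and using that $t\mapsto t^{p-2}$ is increasing for $p>2$ and decreasing for $1<p<2$, the condition $v''(x)>0$ translates into $|x-1|>\lambda|x|$ when $p>2$, and into $|x-1|<\lambda|x|$ when $1<p<2$.

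Next I would solve $g(x)\coloneqq |x-1|-\lambda|x|=0$. Squaring leads to $(1-\lambda^2)x^2-2x+1=0$, whose roots (for $\lambda\ne 1$) are $x_-=\frac{1}{1+\lambda}$ and $x_+=\frac{1}{1-\lambda}$. A one-line case distinction on the piecewise-linear $g$ (writing $g$ separately on $(-\infty,0)$, $[0,1]$, $(1,\infty)$) then shows:
\begin{itemize}
  \item if $\lambda>1$ (so $x_+<0<x_-<1$): $g<0$ on $(-\infty,x_+)\cup(x_-,\infty)$ and $g>0$ on $(x_+,x_-)$;
  \item if $0<\lambda<1$ (so $0<x_-<1<x_+$): $g>0$ on $(-\infty,x_-)\cup(x_+,\infty)$ and $g<0$ on $(x_-,x_+)$.
\end{itemize}
Finally, I would note which regime of $\lambda$ occurs in each of (a)--(d). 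Since $p/(p-2)>0$ for $p>2$ and $p/(p-2)<0$ for $1<p<2$, the four cases pair up as: (a) $1<p<2$, $K>1$ gives $\lambda<1$; (b) $1<p<2$, $K<1$ gives $\lambda>1$; (c) $p>2$, $K>1$ gives $\lambda>1$; (d) $p>2$, $K<1$ gives $\lambda<1$. Combining the sign of $g$ with the sign rule relating $g$ to $v''$ yields exactly the convexity/concavity pattern claimed in each case; note that the roles of $x_-$ and $x_+$ in the statement swap accordingly, which matches the formulas $\frac{1}{1\pm K^{p/(p-2)}}$ given there.

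The only mild subtlety is that for $1<p<2$ the second derivative blows up at $x=0$ and $x=1$, so strictly speaking $v''$ is not defined there. However $v$ is $C^1$ on $\RR$ (as $p>1$) and $v''$ has constant sign on each of the three open sub-intervals avoiding $\{0,1\}$; convexity or concavity then extends across the isolated singular points by continuity of $v'$ (approximate by $v$ restricted to complements of small neighbourhoods of the singular points and pass to the limit). This is the only place where a small argument beyond pure calculation is needed; the rest is bookkeeping of signs across the four cases.
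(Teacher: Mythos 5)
Your proposal is correct and takes essentially the same route as the paper: compute $v''(x)=p(p-1)\bigl(|x-1|^{p-2}-K^p|x|^{p-2}\bigr)$ for $x\notin\{0,1\}$ and read off the convexity pattern from its sign (the paper phrases the sign condition as $|1-1/x|^{p-2}\geq K^p$ and leaves the remaining case bookkeeping to the reader, which you carry out explicitly, including the correct $C^1$ patching across the singular points when $1<p<2$).
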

 
 \begin{proof}
  For $x\neq \{0,1\}$ we have
  \[
   v''(x) = p(p-1)\Bigl(|x-1|^{p-2} - K^p |x|^{p-2}\Bigr),
  \]
which is positive if and only if $|1-1/x|^{p-2}\geq K^{p}$. We leave the rest of the details to the reader.
 \end{proof}

 \begin{lemma}\label{lem:rolle}
Suppose that $v:\RR\to\RR$ is continuously differentiable and strictly concave on $(-\infty,a)$, strictly convex on $(a,b)$, and strictly concave on $(b,\infty)$ for some  $a,b\in\RR$. Let $u:\RR\to\RR$ be an affine function tangent to $v$ at two points. Then $v(x)\leq u(x)$ for $x\in\RR$.
\end{lemma}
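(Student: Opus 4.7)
The plan is to study the difference $w \coloneqq v - u$, which, since $u$ is affine, inherits the strict concavity--convexity--concavity structure of $v$. Let $x_1 < x_2$ denote the two tangent points, so $w(x_i) = w'(x_i) = 0$ for $i = 1, 2$; the goal is to prove $w \leq 0$ on $\RR$.

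First I would exploit the strict monotonicity of $w'$ on each of the three pieces: $w'$ is strictly decreasing on $(-\infty, a)$, strictly increasing on $(a, b)$, and strictly decreasing on $(b, \infty)$, so it has at most one zero on each. Since $x_1$ and $x_2$ are two distinct zeros of $w'$, they must therefore lie in two different pieces.

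Next I would eliminate the configurations in which some $x_i$ lies inside the middle interval $(a, b)$. For example, if $x_1 < a$ and $x_2 \in (a, b)$, then strict concavity on $(-\infty, a)$ together with the double zero at $x_1$ forces $w(a) < 0$ (by comparing $w$ with its tangent at an interior auxiliary point of $(x_1, a)$ and using $w'(x_0) < w'(x_1) = 0$), while strict convexity on $(a, b)$ together with the double zero at $x_2$ gives $w \geq 0$ throughout $(a, b)$, hence $w(a) \geq 0$ by continuity---a contradiction. The symmetric configuration is excluded identically, leaving only $x_1 \leq a$ and $x_2 \geq b$.

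In this final configuration I would conclude in three small steps: on $(-\infty, a]$, $w$ is concave with a double zero at $x_1$, so the $x$-axis is the tangent at $x_1$ and concavity gives $w \leq 0$; symmetrically on $[b, \infty)$; and on $[a, b]$, $w$ is convex with $w(a), w(b) \leq 0$ by the previous two steps, so $w \leq \max\{w(a), w(b)\} \leq 0$. The main obstacle I anticipate is the boundary cases $x_1 = a$ or $x_2 = b$, where strict concavity/convexity is only hypothesized on open intervals; these are handled by a short continuity argument using $w'(a) = 0$ (resp.\ $w'(b) = 0$) combined with strict monotonicity of $w'$ on the adjacent open piece, which forces $w'$ to keep a definite sign there and hence yields the desired monotonicity of $w$ up to the endpoint.
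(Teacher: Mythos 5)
Your argument is correct and complete. Note that the paper itself does not prove this lemma at all---it only cites Lemma~3.6 of \cite{MR3558516}---so there is no internal proof to compare against; your self-contained argument is a perfectly good substitute. The structure is sound: since $u$ is affine, $w=v-u$ inherits the concave/convex/concave pattern, $w'$ is strictly monotone on each of the three (closed, after the obvious continuity extension) subintervals and hence has at most one zero on each, the two double zeros $x_1<x_2$ of $w$ therefore sit in distinct pieces, and the sign comparison at the junction point ($w(a)<0$ from the concave piece versus $w(a)\geq 0$ from the convex piece, and symmetrically at $b$) correctly excludes a tangency point interior to $(a,b)$. The final three-step conclusion on $(-\infty,a]$, $[a,b]$, $[b,\infty)$ is standard, and your remark about the boundary cases $x_1=a$ or $x_2=b$ (using $w'(x_1)=0$ together with strict monotonicity of $w'$ on the adjacent open piece to pin down the sign of $w'$ there) closes the only genuine loose end. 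One cosmetic suggestion: state explicitly at the outset that strict monotonicity of $w'$ on an open interval extends to the closure by continuity of $w'$, since you use the closed-interval version both to rule out two tangency points in one piece (including the case where one of them equals $a$ or $b$) and in the final step.
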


\begin{proof} See Lemma~3.6 in \cite{MR3558516}
\end{proof}

 \begin{lemma}
 \label{lem:ppp}
 For $1<p<2$ we have $(p-1)^{p-1} < p^{p-2}$. For $p>2$ the reverse inequality holds true.
\end{lemma}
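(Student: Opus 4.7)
The plan is to reduce the two-sided assertion to the sign of a single auxiliary function and then analyze it by two derivatives. Set
\[
 \phi(p) \coloneqq (p-1)\ln(p-1) - (p-2)\ln(p), \qquad p>1,
\]
so that the claimed inequality $(p-1)^{p-1} \lessgtr p^{p-2}$ is, after taking logarithms, equivalent to $\phi(p) \lessgtr 0$. I first observe the two ``boundary'' vanishings
\[
 \lim_{p\to 1^+}\phi(p) = 0, \qquad \phi(2)=0,
\]
which already tells us that $\phi$ is not monotone on $(1,\infty)$, so the argument must be based on convexity rather than monotonicity alone.

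Next I compute
\[
 \phi'(p) = \ln\!\Bigl(\frac{p-1}{p}\Bigr)+\frac{2}{p}, \qquad \phi''(p) = \frac{2-p}{p^{2}(p-1)}.
\]
The key sign information is immediate: $\phi''>0$ on $(1,2)$ and $\phi''<0$ on $(2,\infty)$, so $\phi'$ increases on $(1,2)$ and decreases on $(2,\infty)$. Combining this with the three evaluations
\[
 \lim_{p\to 1^+}\phi'(p) = -\infty, \qquad \phi'(2) = 1-\ln 2>0, \qquad \lim_{p\to\infty}\phi'(p)=0,
\]
I conclude that $\phi'$ has a unique zero $p_0\in(1,2)$ and $\phi'>0$ on $(p_0,\infty)$.

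Therefore $\phi$ decreases on $(1,p_0)$ and increases on $(p_0,\infty)$. Using $\phi(1^+)=\phi(2)=0$, the first piece forces $\phi<0$ throughout $(1,2)$, which is the case $1<p<2$ of the lemma. The strict increase on $[2,\infty)$ together with $\phi(2)=0$ gives $\phi>0$ on $(2,\infty)$, which is the case $p>2$. The only mildly delicate point is verifying $\phi'(2) = 1-\ln 2 > 0$ and checking the boundary limits of $\phi$ and $\phi'$; these are elementary, so I expect no real obstacle beyond this bookkeeping.
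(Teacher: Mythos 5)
Your proof is correct and follows essentially the same route as the paper: both take logarithms, form the difference $(p-1)\ln(p-1)-(p-2)\ln p$ (up to sign), and read off the convexity/concavity switch at $p=2$ from the second derivative $\frac{2-p}{p^{2}(p-1)}$. The only difference is in the endgame: the paper concludes directly from concavity/convexity together with the vanishing at $p\to 1^+$, $p=2$ and the sign as $p\to\infty$, whereas you carry out an explicit first-derivative analysis (via $\phi'(2)=1-\ln 2>0$ and $\phi'\to 0$), which is a harmless, equally valid variation.
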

 
 \begin{proof}
  We have equality in the limit for $p\to 1^+$ or $p\to 2^-$ and the (reverse) inequality holds for $p\to \infty$.  
 Since
\begin{equation*}
\big( (p-2)\ln (p)- (p-1)\ln (p-1)\big)'' = \frac{p-2}{(p-1)p^2},
\end{equation*}
the difference of the logarithms of both sides is a concave function on $(1,2)$ and a convex one on $(2,\infty)$. This yields the assertion.
 \end{proof}

 \section{Estimates for \texorpdfstring{$H-I$}{H--I} on general functions}
\label{sec:C}

 For the sake of completeness let us show how to deduce Theorem~\ref{thm:C} from the results of \cite{MR3558516}.
 Apart from notational changes the proof in \cite{MR3558516} follows the scheme outlined above, in Proposition~\ref{prop:majorize}, and we refer to that article for the construction of the special functions $v$, $u$ and related technical details.

\begin{proof}[Proof of Theorem~\ref{thm:C} for $p>1$]
Fix $1< p<\infty$ and $a<p-1$. Let $f\colon [0,\infty)\to \RR$ be such that  $\int_0^\infty |f(t)|^p t^a dt<\infty$.  
If we substitute $g(t) = f(t) t^{a/p}$, then the inequality
\[
 \int_0^\infty |Hf(t)-f(t)|^p t^a dt\leq C_{p,a}^p \int_0^\infty |f(t)|^p t^a dt
\]
transforms   into
\[
  \int_0^\infty \Bigl|\frac{1}{t^{1-a/p}}\int_0^t g(s) s^{-a/p}ds -g(t)\bigr|^p dt\leq C_{p,a}^p \int_0^\infty |g(t)|^p dt.
\]
Thus the assertion follows from Theorem~1.1 in \cite{MR3558516} (applied with $ m = -2a/p >-2(p-1)/p$ and $\lambda=1$). 
\end{proof}

While formally the case $p=1$ is excluded in the formulation in \cite{MR3558516}, this is only because therein the focus was on the case $m=0$ (for which $p>1$ is needed). The statement still holds for $p=1$ (and $m>0$ or, in the present notation, $a<0$) and actually the obvious changes one needs to introduce in the proof simplify it.

\begin{proof}[Proof of Theorem~\ref{thm:C} for $p=1$]
Fix $a <0$.
For $x\in\RR$, define
\begin{align*}
 C &=  \frac{1-a}{-a},\\
 v(x) &= |1-x| -  C |x|,\\
 u(x) &= \frac{1}{a}(a + x).
\end{align*}
We clearly have $v(0)=u(0)$ and $v(1)\leq u(1)$. 
Moreover, $v$ is a piecewise affine function and the comparison of slopes of $v$ and $u$ yields $v\leq u$ on $\RR$.
Thus, by Proposition~\ref{prop:majorize},  $C_{1,a}\leq C$. 
Since by Lemma~\ref{lem:lower_bound} also $C_{1,a}\geq \lim_{\beta\to\infty} k_{1,a}(0,\beta)  = (1-a)/(-a) $, we conclude that  $C_{1,a}=C$. This ends the proof.
\end{proof}

\section{Estimates for \texorpdfstring{$H-I$}{H--I} on decreasing functions}
\label{sec:A}

We split the proof of Theorem~\ref{thm:A} into several cases. We start with $p\in (1,2)$ (the boundary cases $p=1$, $p=2$ are simpler and we treat them separately).

\begin{proof}[\nextcase: $1<p<2$ and $p-1-p^{\frac{p-2}{p-1}} \leq a < p-1$]
For $x\in[0,1]$, define
\begin{align*}
 A &=  \frac{1+a}{p-1-a},\\
 v(x) &= |1-x|^p -  A^p |x|^p,\\
 u(x) &= \frac{(1+a)^{p-1}}{p^{p-3}(p-1-a)} \bigl(\frac{p-1-a}{p} - x\bigr).
\end{align*}
Observe first that $A>1$ (since by our assumption $a>p-1-p^{\frac{p-2}{p-1}}\geq p/2-1$) and $u$ is the tangent to $v$ at $x=(p-1-a)/p$.

Moreover, 
$v$ is concave on $[0,1/(1+A^{p/(p-2)})]$ and convex on $[1/(1+A^{p/(p-2)}),1]$. Also, $A>1$ and $p/(p-2)\leq 0$ yield
\begin{equation*}
  \frac{p-1-a}{p} = \frac{1}{1+A}\leq \frac{1}{1+A^{p/(p-2)}},
\end{equation*}
while the assumption $p-1-p^{\frac{p-2}{p-1}} \leq a$ implies that
\begin{equation*}
 v(1) = -\frac{(1+a)^p}{(p-1-a)^p}\leq - \frac{(1+a)^p}{p^{p-2}(p-1-a)} = u(1).
\end{equation*}
It follows from those properties that $v\leq u$ on $[0,1]$.
Thus, by Proposition~\ref{prop:majorize},  $A_{p,a}\leq A$. Since by Lemma~\ref{lem:lower_bound} also $A_{p,a}\geq k_{p,a}((p-1-a)/p,1) ^{1/p} = A$, we conclude that  $A_{p,a}=A$. This ends the proof.
\end{proof}

Before considering the next cases, let us present some heuristics.
It is clear from the above reasoning, that for $a < p-1-p^{(p-2)/(p-1)}$ we have to define $A$, $v$, and $u$ differently in order for the proof to work, since otherwise the majorization $ v(1) \leq u(1)$ fails.
This suggests that we should increase $A$.
At the same time we still want $u$ to have the form required by Proposition~\ref{prop:majorize} and it reasonable to guess that $u$ should be a tangent to $v$ at some point (which changes and wanders more to the left).
Moreover, it should be intuitively clear that we should increase $A$ (while  changing $u$ accordingly), until we have $v(1) = u(1)$. 

These intuitions stand behind the formal calculations below. Note that $\alphaA$ is the point at which $u$ will be tangent to $v$.

\begin{lemma}
 \label{lem:tech-A-prep}
Suppose that $1<p<2$ and $-1  < a < p-1-p^{\frac{p-2}{p-1}}$. Then
\[
 (1+a)\alpha^p + p - 1 - a - p\alpha > 0 
\]
for $\alpha\in [0,1)$.
\end{lemma}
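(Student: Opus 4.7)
Let me write $h(\alpha) = (1+a)\alpha^p + p-1-a - p\alpha$. The first thing I would notice is that $h(1) = (1+a) + (p-1-a) - p = 0$ identically in $p$ and $a$. Therefore the claimed inequality $h(\alpha) > 0$ on $[0,1)$ will follow immediately if I can show that $h$ is strictly decreasing on $[0,1]$. Computing $h'(\alpha) = p\bigl((1+a)\alpha^{p-1}-1\bigr)$, I see that strict monotonicity will hold as soon as $1+a<1$, i.e.\ $a<0$: indeed, since $p-1\in(0,1)$, we have $\alpha^{p-1}\leq 1$ on $[0,1]$, and therefore $(1+a)\alpha^{p-1}\leq 1+a<1$.

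So the only actual work is to show that the hypothesis $a < p-1-p^{(p-2)/(p-1)}$ automatically forces $a<0$. This is where I would apply Lemma~\ref{lem:ppp}: for $1<p<2$ it states $(p-1)^{p-1} < p^{p-2}$, and raising both sides to the positive power $1/(p-1)$ yields $p-1 < p^{(p-2)/(p-1)}$, i.e.\ the upper bound on $a$ is itself strictly negative. Hence $a<0$, and we are done.

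I don't foresee any real obstacle. The argument is a one-line monotonicity computation plus Lemma~\ref{lem:ppp}; in particular there is no need to locate interior critical points of $h$ or to tune the hypothesis more finely, because the derivative test closes the estimate uniformly on $[0,1]$ as soon as one knows $a<0$. The role of the precise threshold $p-1-p^{(p-2)/(p-1)}$ is not felt in this lemma (any negative upper bound would suffice); it becomes relevant only in the subsequent construction, where this exact value will later appear as the transition point at which the tangent-line majorant from Proposition~\ref{prop:majorize} changes form.
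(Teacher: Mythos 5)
Your proof is correct and follows essentially the same route as the paper: use Lemma~\ref{lem:ppp} to see that the upper bound on $a$ is negative, then observe that the derivative $p\bigl((1+a)\alpha^{p-1}-1\bigr)$ is negative on $[0,1)$ so the function is strictly decreasing and vanishes at the right endpoint. (You correctly place the zero at $\alpha=1$; the paper's proof contains a slip saying the function vanishes at $\alpha=0$.)
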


\begin{proof}
 By Lemma~\ref{lem:ppp},  $a<p-1-p^{\frac{p-2}{p-1}}<0$. Hence,
\[
\frac{d}{d\alpha} (  (1+a)\alpha^p + p - 1 - a - p\alpha ) = p(1+a)\alpha^{p-1} - p < p(1+a) - p \leq 0,
\]
where we also used the constraints $a>-1$ and $\alpha<1$. Thus the investigated function is decreasing and since it vanishes for $\alpha=0$, the assertion holds.  
\end{proof}

Recall that the function
\[
 k_{p,a}(\alpha,1) = 
 \frac{(1-\frac{p-1-a}{p})|\alpha-1|^p }{(1-\frac{p-1-a}{p})|\alpha|^p + (\frac{p-1-a}{p}-\alpha)}
 = \frac{(1+a)|\alpha-1|^p }{(1+a)|\alpha|^p + p-1-a -p\alpha}
\]
was defined in~\eqref{eq:def_k} for $\alpha  < (p-1-a)/p$. Lemma~\ref{lem:tech-A-prep} implies that the same definition makes sense for all $\alpha\in[0,1)$ and justifies the notation  used below.

\begin{lemma}
 \label{lem:tech-A}
Suppose that $1<p<2$ and $-1  < a < p-1-p^{\frac{p-2}{p-1}}$. Then there exists exactly one number $\alphaA=\alphaA(p,a)\in[0,1)$
such that
\[
 \sup \{ k_{p,a}(\alpha,1) : 0\leq \alpha < 1\}
 = k_{p,a}(\alphaA,1).
\]
Moreover, for $p-2 < a < p-1-p^{\frac{p-2}{p-1}}$,
$\alphaA$ is the unique solution of the equation
\begin{equation*}
-(p-1)(1-\alphaA) + 1+a -(1+a)\alphaA^{p-1} =0
\end{equation*}
in the interval $(0, (p-1-a)/p)$,
whereas for $ -1< a \leq p-2$, $\alphaA=0$.
\end{lemma}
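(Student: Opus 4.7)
The plan is to regard $\phi(\alpha) := k_{p,a}(\alpha, 1) = \frac{(1+a)(1-\alpha)^p}{D(\alpha)}$, with $D(\alpha) := (1+a)\alpha^p - p\alpha + p-1-a$, as a continuous function on the compact interval $[0, 1]$. Lemma~\ref{lem:tech-A-prep} yields $D > 0$ on $[0, 1)$; the identity $D(1) = 0$ together with $D'(1) = pa \neq 0$ and $p > 1$ show that $\phi$ admits a continuous extension $\phi(1) = 0$. Since $\phi > 0$ on $[0, 1)$ while $\phi(1) = 0$, the supremum over $[0, 1)$ is attained and lies in $[0, 1)$.

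To locate this maximum I would first compute, on $(0, 1)$, the derivative
\begin{equation*}
\phi'(\alpha) = \frac{p(1+a)(1-\alpha)^{p-1}}{D(\alpha)^2} \, g(\alpha),
\end{equation*}
where $g(\alpha) := -(p-1)(1-\alpha) + 1+a - (1+a)\alpha^{p-1}$, so that the sign of $\phi'$ coincides with that of $g$. The function $g$ satisfies $g(0) = a - (p-2)$, $g(1) = 0$, and
\begin{equation*}
g''(\alpha) = (p-1)(2-p)(1+a)\alpha^{p-3} > 0 \quad \text{for } \alpha > 0,
\end{equation*}
since $1 < p < 2$ and $a > -1$. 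In particular $g$ is strictly convex on $(0, \infty)$, so it has at most two zeros there, one of which is $\alpha = 1$.

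The argument then splits according to the sign of $g(0)$. When $-1 < a \leq p-2$, $g(0) \leq 0$, and strict convexity combined with $g(1) = 0$ forces $g(\alpha) < (1-\alpha)g(0) + \alpha g(1) \leq 0$ for every $\alpha \in (0, 1)$; hence $\phi$ is strictly decreasing on $[0, 1)$ and the supremum is attained uniquely at $\alphaA = 0$. When $p-2 < a < p-1-p^{(p-2)/(p-1)}$, $g(0) > 0$, and the key computation
\begin{equation*}
g\bigl(\tfrac{p-1-a}{p}\bigr) = (1+a)\Bigl[\tfrac{1}{p} - \bigl(\tfrac{p-1-a}{p}\bigr)^{p-1}\Bigr],
\end{equation*}
combined with the observation that the bracket is strictly increasing in $a$ and vanishes exactly at $a = p-1-p^{(p-2)/(p-1)}$, gives $g((p-1-a)/p) < 0$. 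The intermediate value theorem yields a zero $\alphaA \in (0, (p-1-a)/p)$; strict convexity together with $g(1) = 0$ make it the unique zero of $g$ in $(0, 1)$, with $g > 0$ on $(0, \alphaA)$ and $g < 0$ on $(\alphaA, 1)$. Consequently $\phi$ strictly increases on $[0, \alphaA]$ and strictly decreases on $[\alphaA, 1)$, and the supremum is attained uniquely at $\alphaA$.

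The main obstacle is essentially bookkeeping in the derivative identity that produces $g$; once the formula for $\phi'$ is in place, the rest is routine application of strict convexity and the IVT. The specific upper bound $a < p-1-p^{(p-2)/(p-1)}$ enters exactly once, in the sign computation for $g((p-1-a)/p)$, and this is precisely what guarantees that $\alphaA$ falls inside the prescribed open interval $(0, (p-1-a)/p)$.
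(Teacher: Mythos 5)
Your proof is correct and follows essentially the same route as the paper's: both reduce the problem to the sign of the strictly convex auxiliary function $g(\alpha)=-(p-1)(1-\alpha)+1+a-(1+a)\alpha^{p-1}$ (the paper's $h$), split on the sign of $g(0)=a-(p-2)$, and use $g((p-1-a)/p)<0 \iff a<p-1-p^{(p-2)/(p-1)}$ to place $\alphaA$ in $(0,(p-1-a)/p)$. Your extra verifications (the continuous extension $\phi(1)=0$ justifying attainment of the supremum, and the explicit equivalence for the sign of $g((p-1-a)/p)$, which the paper only asserts) are welcome but do not change the argument.
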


\begin{proof}
For $\alpha\in(0,1)$ the derivative $\frac{d}{d\alpha} k_{p,a}(\alpha,1)$ is of the same sign as 
\begin{multline*}
-p(1+a)(1-\alpha)^{p-1} \cdot  \bigl( (1+a)\alpha^p + p-1-a-p\alpha\bigr) 
-(1+a)(1-\alpha)^p \cdot \bigl( p(1+a)\alpha^{p-1} -p\bigr)\\
=p(1+a)(1-\alpha)^{p-1}  \Bigl( - (1+a)\alpha^{p-1} + 1+a +(p-1)(\alpha-1)\Bigr).
\end{multline*}
In order to maximize $k_{p,a}(\cdot,1)$ we investigate the function 
\[
 h(\alpha) =  - (1+a)\alpha^{p-1} + 1+a +(p-1)(\alpha-1), \quad  \alpha\in[0,1].
\]
We have $h(0) = a- (p-2)$, $h(1) = 0$, and, for $\alpha\in(0,1)$, 
\begin{align*}
   h'(1^-) &=  - (p-1)a >0,\\
   h''(\alpha) &=  (p-1)(2-p)(1+a)\alpha^{p-3} > 0.
   \end{align*}
 Thus $h$ is strictly convex on $[0,1]$.
 
 If $p-2 < a < p-1-p^{\frac{p-2}{p-1}}$, then $h$ has a unique zero $\alphaA = \alphaA(p,a)$ in $[0,1]$. Moreover, $h((p-1-a)/p) < 0$ (this turns out to be equivalent to $a < p-1-p^{\frac{p-2}{p-1}}$), so $\alphaA < (p-1-a)/p$. The maximum of $k_{p,a}(\cdot,1)$ on $[0,1]$ is attained  at $\alphaA$.
 
 If $ -1 < a \leq p-2$, then $h$ is non-positive on  $[0,1]$ and thus the maximum of $k_{p,a}(\cdot,1)$ on $[0,1]$ is attained  at $\alphaA=0$. This ends the proof. 
\end{proof}

We continue the proof of Theorem~\ref{thm:A}.

\begin{proof}[\nextcase:  $1<p<2$ and $p-2 <a < p-1-p^{\frac{p-2}{p-1}}$]
Let $\alphaA=\alphaA(p,a)$ be the constant from the preceding  lemma.
 This time, for $x\in[0,1]$, define
\begin{align*}
 A^p &=  k_{p,a}(\alphaA,1) = \frac{(1+a) (1-\alphaA)^p}{p-1-a - p \alphaA + (1+a)\alphaA^p},\\
 v(x) &= |1-x|^p -  A^p |x|^p,\\
 u(x) &= \frac{pA^p}{1+a} \bigl(\frac{p-1-a}{p} - x\bigr).
\end{align*}

By Lemma~\ref{lem:tech-A}, 
\[
 A^p \geq \frac{(1+a) (1-x)^p}{p-1-a - p x+ (1+a)x^p}
\]
for $x\in[0,1)$. This is equivalent to $v\leq u$ on $[0,1)$ (and the inequality at $x=1$ is obvious). Thus, by Proposition~\ref{prop:majorize},  $A_{p,a}\leq A$. Since, by Lemma~\ref{lem:lower_bound}, also $A_{p,a}^p\geq k_{p,a}(\alphaA,1) = A^p$, we conclude that  $A_{p,a}=A$. This ends the proof.
\end{proof}

\begin{proof}[\nextcase:  $1<p<2$ and $-1 <a\leq p-2$]
For $x\in[0,1]$ define
\begin{align*}
 A^p &=  \frac{1+a }{p-1-a},\\
 v(x) &= |1-x|^p -  A^p |x|^p,\\
 u(x) &= \frac{p}{p-1-a} \bigl(\frac{p-1-a}{p} - x\bigr).
\end{align*}

By Lemma~\ref{lem:tech-A}, 
\[
 A^p \geq \frac{(1+a) (1-x)^p}{p-1-a - p x+ (1+a)x^p}
\]
for $x\in[0,1)$. This is equivalent to $v\leq u$ on $[0,1)$ (and the inequality at $x=1$ is obvious). Thus, by Proposition~\ref{prop:majorize},  $A_{p,a}\leq A$. Since, by Lemma~\ref{lem:lower_bound}, also $A_{p,a}^p\geq k_{p,a}(0,1) = A^p$, we conclude that  $A_{p,a}=A$. This ends the proof.
\end{proof}

\begin{proof}[\nextcase: $p=2$]
The proof can be repeated verbatim as above, but since $p-1-p^{(p-2)/(p-1)} = p-2$, the most complicated `middle' case vanishes. 
\end{proof}

\begin{proof}[\nextcase: $p=1$] Here we can just take $A =  \frac{1+a}{-a}$, $v(x) = u(x)= 1-x + \frac{1+a}{a}x.$
\end{proof}

We move to $p>2$.  The proof is similar to the one for the range $1\leq p\leq 2$. The main difference is that 
now---provided that $A>1$---the function
\[
 x\in[0,1] \mapsto |1-x|^p - A^p |x|^p
\]
is first convex and then concave (see Lemma~\ref{lem:ccc}). This makes the majorization $v\leq u$ near zero a problem, whereas $v\leq u$  on some interval with the right end equal to $1$ will follow automatically by concavity.

\begin{proof}[\nextcaselabel{case:A-6}:  $p>2$ and $p^{\frac{p-2}{p-1}} -1 \leq a < p-1$]
For $x\in[0,1]$ define
\begin{align*}
 A &=  \frac{1+a}{p-1-a},\\
 v(x) &= |1-x|^p -  A^p |x|^p,\\
 u(x) &= \frac{(1+a)^{p-1}}{p^{p-3}(p-1-a)} \bigl(\frac{p-1-a}{p} - x\bigr).
\end{align*}
Observe first that $A>1$ (since by our assumption $a>p^{\frac{p-2}{p-1}} - 1\geq p/2-1$) and $u$ is the tangent to $v$ at $x=(p-1-a)/p$.

Moreover, by Lemma~\ref{lem:ccc}, $v$ is convex on $[0,1/(1+A^{p/(p-2)})]$ and concave on $[1/(1+A^{p/(p-2)}),1]$. Also, $A>1$ and $p/(p-2)\geq 1$ imply that
\begin{equation*}
  \frac{p-1-a}{p} = \frac{1}{1+A}\geq \frac{1}{1+A^{p/(p-2)}},
\end{equation*}
while the assumption $p^{\frac{p-2}{p-1}} - 1 \leq a$ yields
\begin{equation*}
 v(0) = 1\leq \frac{(1+a)^p}{p^{p-2}} = u(0).
\end{equation*}
It follows from those properties that $v\leq u$ on $[0,1]$.
Thus, by Proposition~\ref{prop:majorize},  $A_{p,a}\leq A$. Since by Lemma~\ref{lem:lower_bound} also $A_{p,a}\geq k_{p,a}((p-1-a)/p,1) ^{1/p} = A$, we conclude that  $A_{p,a}=A$. This ends the proof.
\end{proof}

 \begin{remark}
  \label{rem:B=A-easy}
 In the proof of Case~\ref{case:A-6} above $u$ majorizes $v$ not only on $[0,1]$, but on $[0,\infty)$. As a conclusion, by Proposition~\ref{prop:majorize}, we also get $B_{p,a} = A_{p,a}$ for $p>2$ and $p^{\frac{p-2}{p-1}} -1 \leq a < p-1$.
 \end{remark}

For $a < p^{(p-2)/(p-1)}-1$ the majorization $v(0) \leq u(0)$ (with $A$, $v$, and $u$ as in the above proof of Case~\ref{case:A-6}) fails, so the construction of $u$ has to be modified.

Obviously, the definition of function
\[
 k_{p,a}(0,\beta) = 
 \frac{\beta-\frac{p-1-a}{p} + \frac{p-1-a}{p}|\beta-1|^p}{\frac{p-1-a}{p}|\beta|^p}
 =  \frac{p\beta-(p-1-a) + (p-1-a)|\beta-1|^p}{(p-1-a)|\beta|^p}.
\]
can be extended to all $\beta>0$.

\begin{lemma}
 \label{lem:tech-A-2}
Suppose that $p>2$ and $-1  < a < p^{\frac{p-2}{p-1}}-1$. Then there exists exactly one number $\betaA=\betaA(p,a)\in[0,1)$
such that
\[
 \sup \{ k_{p,a}(0,\beta) : 0< \beta  \leq 1\}
 = k_{p,a}(0,\betaA).
\]
Moreover, for $0< a <p^{\frac{p-2}{p-1}}-1$,
$\betaA$ is the unique solution of the equation
\begin{equation*}
p-1-a - (p-1)\beta -  (p-1-a)(1-\beta)^{p-1}=0
\end{equation*}
in the interval $((p-1-a)/p,1)$,
whereas for $ -1< a \leq 0$, $\betaA=1$.
\end{lemma}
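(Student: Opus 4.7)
The strategy follows the template of Lemma~\ref{lem:tech-A}: reduce the problem to analyzing the sign of an auxiliary function on $[0, 1]$. First, I would rewrite
\[
 k_{p,a}(0, \beta) = \frac{p\beta - (p-1-a) + (p-1-a)(1-\beta)^p}{(p-1-a)\beta^p}
\]
and apply the quotient rule. A direct computation shows that the numerator of $\frac{d}{d\beta} k_{p,a}(0, \beta)$ factors as $p(p-1-a)\beta^{p-1} \cdot g(\beta)$, where
\[
 g(\beta) := (p-1-a) - (p-1)\beta - (p-1-a)(1-\beta)^{p-1}
\]
is the function from the displayed equation. Hence, on $(0, 1)$ the sign of $\frac{d}{d\beta} k_{p,a}(0, \beta)$ coincides with the sign of $g$.

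Next I would study $g$ on $[0,1]$. Elementary computation yields $g(0) = 0$, $g(1) = -a$, $g'(0) = (p-1)(p-2-a)$, and $g''(\beta) = -(p-1)(p-2)(p-1-a)(1-\beta)^{p-3} < 0$ for $p > 2$ and $\beta \in (0, 1)$, so $g$ is strictly concave on $[0,1]$. A key input is Lemma~\ref{lem:ppp}, which gives $p^{(p-2)/(p-1)} - 1 < p-2$ for $p>2$; hence the hypothesis $a < p^{(p-2)/(p-1)}-1$ automatically forces $a < p-2$, and in particular $g'(0) > 0$. An explicit evaluation also gives
\[
 g\Bigl(\frac{p-1-a}{p}\Bigr) = \frac{p-1-a}{p}\Bigl( 1 - \frac{(1+a)^{p-1}}{p^{p-2}}\Bigr),
\]
which is strictly positive, once again by the hypothesis on $a$.

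To conclude, I would split into two cases. For $-1 < a \leq 0$, we have $g(1) = -a \geq 0$, and concavity together with $g(0) = 0$ yields $g(\beta) \geq -a\beta \geq 0$ on $[0,1]$. Hence $k_{p,a}(0,\cdot)$ is non-decreasing on $(0,1]$, and (by strict concavity of $g$, $k_{p,a}(0,\cdot)$ is in fact strictly increasing) the supremum is uniquely attained at $\betaA = 1$. For $0 < a < p^{(p-2)/(p-1)}-1$, the strictly concave $g$ with $g(0)=0$, $g'(0)>0$, and $g(1) = -a < 0$ admits exactly one zero $\betaA$ in $(0,1)$, with $g > 0$ on $(0,\betaA)$ and $g < 0$ on $(\betaA,1)$; the inequality $g((p-1-a)/p) > 0$ localizes this zero to $((p-1-a)/p, 1)$. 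The corresponding monotonicity pattern of $k_{p,a}(0,\cdot)$ yields the supremum (uniquely) at $\beta = \betaA$.

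There is no deep obstacle; the argument is essentially algebraic, mirroring the $1<p<2$ case treated in Lemma~\ref{lem:tech-A}. The main bookkeeping point is that the sign of $g'(0)$ would flip at $a=p-2$, and the hypothesis $a < p^{(p-2)/(p-1)}-1$---together with Lemma~\ref{lem:ppp}---is exactly what keeps us on the ``$g'(0) > 0$'' side, so that $g$ has the right shape: strictly concave with the zero at $0$ and at most one additional zero in $(0,1]$.
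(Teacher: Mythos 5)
Your proposal is correct and follows essentially the same route as the paper's proof: the sign of $\frac{d}{d\beta}k_{p,a}(0,\beta)$ is reduced to that of the same auxiliary function (your $g$ equals the paper's $h$ on $[0,1]$), whose strict concavity, the values $g(0)=0$, $g(1)=-a$, the positivity of $g'(0)$ via Lemma~\ref{lem:ppp}, and the evaluation at $(p-1-a)/p$ are used exactly as in the paper. Your explicit closed form for $g\bigl((p-1-a)/p\bigr)$ just makes precise the paper's parenthetical remark that its positivity is equivalent to $a<p^{(p-2)/(p-1)}-1$.
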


\begin{proof}
For $\beta>0$ the derivative $\frac{d}{d\beta} k_{p,a}(0,\beta)$ is of the same sign as 
\begin{multline*}
\bigl( p + (p-1-a)p|\beta-1|^{p-2}(\beta-1) \bigr)  \cdot (p-1-a)\beta^p\\
 -\bigl(p\beta-(p-1-a) + (p-1-a)|\beta-1|^p\bigr)\cdot (p-1-a)p\beta^{p-1}\\
=p(p-1-a)\beta^{p-1}  \Bigl(p-1-a - (p-1)\beta + (p-1-a)|\beta-1|^{p-2}(\beta-1) \Bigr).
\end{multline*}
In order to maximize $k_{p,a}(0,\cdot)$ we investigate the function 
\[
 h(\beta) = p-1-a - (p-1)\beta + (p-1-a)|\beta-1|^{p-2}(\beta-1), \quad  \beta\in[0,1].
\]
We have $h(0) = 0$, $h(1) = -a$, and, by Lemma~\ref{lem:ppp}, 
\begin{equation*}
    h'(0^+) =  (p-1)(p-2-a) \geq  (p-1)(p-1-p^{\frac{p-2}{p-1}})>0.
\end{equation*}
 Moreover, $h$ is strictly concave on $[0,1]$, since for $\beta\in(0,1)$, 
 \begin{equation*}
     h''(\beta) =  (p-1)(p-2)(p-1-a)|\beta-1|^{p-4}(\beta-1) < 0.
 \end{equation*}

 If $0 < a < p^{\frac{p-2}{p-1}}-1$, then $h$ has a unique zero $\betaA = \betaA(p,a)$ in $[0,1]$. Moreover, $h((p-1-a)/p) >0$ (this turns out to be equivalent to $a < p^{\frac{p-2}{p-1}} -1$), so $\betaA > (p-1-a)/p$. The maximum of $k_{p,a}(0,\cdot)$ on $[0,1]$ is attained  at $\betaA$.
 
 If $ -1 < a \leq 0$, then $h$ is non-negative on  $[0,1]$ and thus the maximum of $k_{p,a}(0,\cdot)$ on $[0,1]$ is attained  at $\betaA=1$. This ends the proof. 
\end{proof}

We continue the proof of Theorem~\ref{thm:A}. 

\begin{proof}[\nextcaselabel{case:A-7}:  $p>2$ and $0 <a < p^{\frac{p-2}{p-1}}-1$]
Let $\betaA=\betaA(p,a)$ be the constant from the preceding lemma.
 This time, for $x\in[0,1]$, define
\begin{align*}
 A^p &=   k_{p,a}(0,\betaA) = 
 \frac{p\betaA-(p-1-a) + (p-1-a)|\betaA-1|^p}{(p-1-a)|\betaA|^p},\\
 v(x) &= |1-x|^p -  A^p |x|^p,\\
 u(x) &= \frac{p}{p-1-a} \bigl(\frac{p-1-a}{p} - x\bigr).
 \end{align*}
 
 By Lemma~\ref{lem:tech-A-2}, 
\[
 A^p \geq \frac{px - (p-1-a) + (p-1-a)(1-x)^p}{(p-1-a)x^p}.
\]
for $x\in(0,1]$. This is equivalent to $v\leq u$ on $(0,1]$ (and the inequality at $x=0$ is obvious). Thus, by Proposition~\ref{prop:majorize},  $A_{p,a}\leq A$. Since, by Lemma~\ref{lem:lower_bound}, also $A_{p,a}^p\geq k_{p,a}(0,\betaA) = A^p$, we conclude that  $A_{p,a}=A$. This ends the proof.
 \end{proof}
 
 \begin{remark}
  \label{rem:B=A-harder}
  We claim that for  $p>2$ and $0 < a < p^{\frac{p-2}{p-1}}-1$, if $A_{p,a}\geq 1$, then $B_{p,a}=A_{p,a}$. Indeed, the function 
\[
 h(\beta) = p-1-a - (p-1)\beta + (p-1-a)|\beta-1|^{p-2}(\beta-1), \quad  \beta \geq 0, 
\]
which we considered in the proof of Lemma~\ref{lem:tech-A-2} only for $\beta\in[0,1]$,
 is strictly concave on $[0,1]$ and strictly convex on $[1,\infty)$ with $\lim_{\beta\to\infty} h(\beta) = +\infty$. Thus $h$ changes the sign twice in $[0,\infty)$ (at $\beta_0$ and somewhere in $(1,\infty)$). Hence
\begin{align*}
 \sup \{ k_{p,a}(0,\beta) : \beta>0\} &=  \max\bigl\{k_{p,a}(0,\betaA), \lim_{\beta\to\infty}k_{p,a}(0,\beta)\bigr\}\\
 &= \max\{k_{p,a}(0,\betaA), 1\} = A_{p,a}^p.
\end{align*}
Thus, if $A_{p,a}\geq 1$, in the proof of Case~\ref{case:A-7} we get the majorization $v\leq u$ not only on $[0,1]$, but on $[0,\infty)$. Hence $B_{p,a} \leq A_{p,a}$ (by Proposition~\ref{prop:majorize}), and consequently  $B_{p,a} =A_{p,a}$.
 \end{remark}

\begin{proof}[\nextcase:  $p>2$ and $-1 <a\leq 0$]
 This time, for $x\in[0,1]$, define
\begin{align*}
 A^p &=  \frac{1+a }{p-1-a},\\
 v(x) &= |1-x|^p -  A^p |x|^p,\\
 u(x) &= \frac{p}{p-1-a} \bigl(\frac{p-1-a}{p} - x\bigr).
\end{align*}

By Lemma~\ref{lem:tech-A-2}, 
\[
 A^p \geq \frac{px - (p-1-a) + (p-1-a)(1-x)^p}{(p-1-a)x^p}.
\]
for $x\in[0,1)$. This is equivalent to $v\leq u$ on $[0,1)$ (and the inequality at $x=1$ is obvious). Thus, by Proposition~\ref{prop:majorize},  $A_{p,a}\leq A$. Since, by Lemma~\ref{lem:lower_bound}, also $A_{p,a}^p\geq k_{p,a}(0,1) = A^p$, we conclude that  $A_{p,a}=A$. This ends the proof.
\end{proof}

\setcounter{proofincases}{0} 

We have considered all cases and the proof of Theorem~\ref{thm:A} is finished.
We end with one more remark which will come handy in the next section. 

\begin{remark}
 \label{rem:transition}
  Fix $p>2$. We have $A_{p,a} = (\frac{1+a}{p-1-a})^p > 1$ for $a \geq p^{(p-2)/(p-1)} -1$ 
 and $A_{p,a} = \frac{1+a}{p-1-a} < 1$ for  $-1 < a \leq 0$.  We claim that there exists exactly one
 \[
  \aBA =\aBA(p) \in\bigl(0, p^{\frac{p-2}{p-1}}-1\bigr) 
 \]
such that $A_{p,\aBA} = 1$ (and, clearly, $A_{p,a}>1$ for $a>\aBA$) and, additionally, we want to identify this parameter.
 
Assume first that for some $a\in (0, p^{(p-2)/(p-1)}-1)$ we have $A_{p,a}=1$. Using the notation from  the proof of Case~\ref{case:A-7} above,  we have $A=1$ and $v(\betaA)=u(\betaA)$. This necessarily means that also $v'(\betaA)=u'(\betaA)$, i.e.,
\[
(1-\betaA)^{p-1} + \betaA^{p-1} = \frac{1}{p-1-a}
\]
(otherwise the majorization $v\leq u$ would not hold). Combining this with
\[
A^p =  \frac{p\betaA-(p-1-a) + (p-1-a)|\betaA-1|^p}{(p-1-a)|\betaA|^p} =1
\]
yields the necessary condition
\[
-(p-1)(1-\betaA)^p + p(1-\betaA)^{p-1} + (p-1)\betaA^{p} =1.
\]

  Therefore, let us  consider the function 
 \[
 g(\beta) = -(p-1)(1-\beta)^p + p(1-\beta)^{p-1} + (p-1)\beta^{p}, \quad \beta\in[0,1].
 \]
 We have $g(0) =1< p-1 =g(1)$ and
 \begin{align*}
  g'(\beta) &= p(p-1)(1-\beta)^{p-1} - p(p-1)(1-\beta)^{p-2} + p(p-1)\beta^{p-1}\\
  &= -p(p-1)(1-\beta)^{p-2}\beta + p(p-1)\beta^{p-1}\\
   &= p(p-1)\beta \bigl( - (1-\beta)^{p-2}+\beta^{p-2}\bigr).
\end{align*}
Hence $g$ decreases on $[0,1/2]$ and increases on $[1/2,1]$ and thus there exists exactly one $\betaBA=\betaBA(p)\in(1/2,1)$ such that $g(\betaBA)=1$.  
 
 Define $\aBA =\aBA(p)>0$ by the relation
 \[
  (1-\betaBA)^{p-1} + \betaBA^{p-1} = \frac{1}{p-1-\aBA}.
 \]
Let us denote $v(x) = |x-1|^p - |x|^p$ and let $u$ be the tangent to $v$ at $x=\betaBA$:
\[
 u(x) = -p\bigl((1-\betaBA)^{p-1} +\betaBA^{p-1}\bigr)(x-\betaBA) + (1-\betaBA)^{p} - \betaBA^p.
\]
The definitions of $\betaBA$, $\aBA$ imply that $u(x) = 1 -\frac{p}{p-1-\aBA} x$ and 
\[
\frac{(\betaBA-\frac{p-1-\aBA}{p}) + \frac{p-1-\aBA}{p}|\betaBA-1|^p}{\frac{p-1-\aBA}{p}|\betaBA|^p} =1.
\]
Since $\betaBA\in(1/2,1)$, this means that $\betaBA \geq \frac{p-1-\aBA}{p}$ and $k_{p,\aBA}(0,\betaBA) = 1$. Moreover, since $v$ is convex on $[0,1/2]$ and concave on $[1/2,\infty)$ and $v(0)=u(0)$, the majorization $v\leq u$ holds on $[0,1]$.      
Thus, by Proposition~\ref{prop:majorize}, $A_{p,\aBA}\leq 1$.  Since by Lemma~\ref{lem:lower_bound} also $A_{p,\aBA}\geq k_{p,\aBA}(0,\betaBA) = 1$, we conclude that  $A_{p,\aBA}=1$.
\end{remark}

\section{Estimates for \texorpdfstring{$H-I$}{H--I} on positive functions}
\label{sec:B}

We start with the range $p>2$, in which the proof of Theorem~\ref{thm:B} is simpler.

\begin{lemma}
\label{lem:Av1}
For $1\leq p  <\infty$, $a < p-1$ we have $B_{p,a} \geq \max\{A_{p,a}\indbr{a>-1}, 1\}$.
\end{lemma}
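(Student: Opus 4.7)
The inequality has two parts, which can be handled separately.

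\textbf{The bound $B_{p,a} \geq A_{p,a}$ when $a > -1$.} This is essentially a containment-of-classes argument. For $a > -1$ the class of positive decreasing functions $f$ with $\int_0^\infty |f(t)|^p t^a dt < \infty$ is non-trivial (e.g.\ the extremal functions $f_{\alpha,\beta}$ from \eqref{eq:f-ab} with $0\le\alpha<\frac{p-1-a}{p}<\beta\le 1$ are of this form). Every such $f$ is in particular positive, so the smallest constant $K$ for which
\[
\int_0^\infty |Hf(t)-f(t)|^p t^a dt \le K^p \int_0^\infty |f(t)|^p t^a dt
\]
holds for all admissible positive $f$ must in particular work on the smaller subclass of positive decreasing functions. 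Hence $B_{p,a}\ge A_{p,a}$.

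\textbf{The bound $B_{p,a}\geq 1$.} The plan is to invoke Lemma~\ref{lem:lower_bound} along the one-parameter family $(\alpha,\beta)=(0,\beta)$ with $\beta\to\infty$. A direct computation from the definition \eqref{eq:def_k} gives, for $\beta>1$,
\[
 k_{p,a}(0,\beta) = \frac{\bigl(\beta-\tfrac{p-1-a}{p}\bigr) + \tfrac{p-1-a}{p}(\beta-1)^p}{\tfrac{p-1-a}{p}\,\beta^p},
\]
and both the numerator and the denominator are asymptotic to $\tfrac{p-1-a}{p}\beta^p$ as $\beta\to\infty$, so $k_{p,a}(0,\beta)\to 1$. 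Combined with Lemma~\ref{lem:lower_bound}, this yields
\[
B_{p,a}^p \;\ge\; \sup_{\beta>\frac{p-1-a}{p}}k_{p,a}(0,\beta)\;\ge\; \lim_{\beta\to\infty} k_{p,a}(0,\beta)\;=\;1.
\]

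Taking the maximum of the two lower bounds gives the claim. There is no real obstacle: both inequalities reduce to standard facts already prepared in the excerpt---the monotonicity of best constants under shrinking the function class, and the explicit extremal family used in Lemma~\ref{lem:lower_bound}. The only minor point is remembering that for $a\leq -1$ the first bound is vacuous (as indicated by the factor $\indbr{a>-1}$), because then the right-hand side of the defining inequality for $A_{p,a}$ is infinite for every non-trivial positive decreasing $f$.
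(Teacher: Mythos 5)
Your proof is correct. The first half ($B_{p,a}\ge A_{p,a}$ for $a>-1$ by containment of classes) is exactly the paper's one-line argument. For the second half ($B_{p,a}\ge 1$) you take a genuinely different route: you reuse the extremal power family of Lemma~\ref{lem:lower_bound} and send $\beta\to\infty$ along $\alpha=0$, whereas the paper tests the defining inequality directly on the indicator spike $f=\ind{[1,1+\varepsilon]}$ and lets $\varepsilon\to 0^+$, computing $Hf$ explicitly and comparing the two integrals. Your version is shorter and stays entirely inside machinery already established (and is in fact the same computation the paper itself uses in the unweighted warm-up, where it notes $B_{p,0}\ge\lim_{\beta\to\infty}k_{p,0}(0,\beta)^{1/p}=1$); the paper's spike argument is more self-contained and makes transparent that the constant $1$ comes from highly localized positive functions. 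One small inaccuracy: for $p=1$ the numerator of $k_{1,a}(0,\beta)$ is $(1-a)\beta+2a$, not asymptotic to $\frac{p-1-a}{p}\beta^p=-a\beta$, so the limit is $\frac{1-a}{-a}>1$ rather than $1$; this only strengthens the bound $B_{1,a}\ge 1$, but you should state the asymptotics for $p>1$ and handle $p=1$ as a (trivial) separate line.
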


\begin{proof}
Clearly, for  $1\leq p  <\infty$  and $-1<a < p-1$, $B_{p,a} \geq A_{p,a}$.

We now prove that $B_{p,a}\geq 1$. To this end fix $\varepsilon>0$ and consider $f=\ind{[1,1+\varepsilon]}$. We have
\begin{equation*}
 Hf(t) = 
 \begin{cases}
  0 & \text{for } t\in[0,1],\\
  \frac{t-1}{t} & \text{for } t\in(1,1+\varepsilon),\\
  \frac{\varepsilon}{t} & \text{for } t\geq 1+\varepsilon.
  \end{cases}
\end{equation*}
Hence,
\begin{align*}
 \int_0^\infty |Hf(t) -f(t)|^p t^a dt 
  &\geq \int_1^{1+\varepsilon} t^{a-p} dt= \frac{1}{1+a-p} \bigl( (1+\varepsilon)^{1+a-p}- 1\bigr), \\
 \int_0^\infty |f(t)|^p t^a dt 
 &= 
 \begin{cases}
 \frac{1}{1+a} \bigl( (1+\varepsilon)^{1+a}- 1\bigr) & \text{if } a\neq  -1,\\
 \ln(1+\varepsilon) & \text{if } a= -1.
 \end{cases}
\end{align*}
To obtain the assertion it suffices to consider the ratio of these two quantities with $\varepsilon\to 0^+$.  
\end{proof}

We are ready to start the proof of Theorem~\ref{thm:B}. While the formula $B_{p,a} = \max\{A_{p,a}\indbr{a>-1}, 1\}$ can be derived abstractly, without identifying when $B_{p,a} = A_{p,a}$ and when $B_{p,a} =  1$, it is convenient to use Remark~\ref{rem:transition}.

\begin{proof}[\nextcaselabel{case:B-1}: $p\geq2$]
Fix $p\geq 2$. Let $\aBA =\aBA(p) \in (0, p^{(p-2)/(p-1)}-1)$
be the parameter identified in Remark~\ref{rem:transition}. For $a\in(\aBA,p-1)$ we have $A_{p,a} >1$, and $B_{p,a} = A_{p,a}$ follows from Remarks~\ref{rem:B=A-easy} and \ref{rem:B=A-harder}.

For $a\leq \aBA$ and $x\geq 0$ define
\begin{align*}
v(x) &= |1-x|^p - |x|^p,\\
u(x,a) &= \frac{p}{p-1-a}\bigl( \frac{p-1-a}{p} - x \bigr) = 1 -\frac{p}{p-1-a}x.
\end{align*}
Since $A_{p,\aBA} = 1$, $v(x) \leq u(x,\aBA)$ for $x\geq 0$ (by Remark~\ref{rem:B=A-harder}).
All the more, $v(x) \leq u(x,a)$ for $x\geq 0$ and $a\leq \aBA$. Thus, by Proposition~\ref{prop:majorize} and Lemma~\ref{lem:Av1}, $B_{p,a}=1$ for $a\leq \aBA$.
\end{proof}

\begin{proof}[\nextcase: $p=2$]
Up to obvious changes, the proof goes as in Case~\ref{case:B-1} above.
\end{proof}

\begin{proof}[\nextcase: $p=1$]
By the proof of Theorem~\ref{thm:C}, $C_{1,a} =\lim_{\beta\to\infty} k_{1,a}(0,\beta)$. Since the latter quantity is a lower bound for $B_{1,a}$ (by Lemma~\ref{lem:lower_bound}), we conclude that $B_{1,a}= C_{1,a}$. 
\end{proof}

\begin{lemma}
 \label{lem:tech-B-2}
Suppose that $1 < p < 2$ and $a \leq p-2$. Then there exists exactly one number $\betaB=\betaB(p,a)> (p-1-a)/p$
such that
\[
 \sup \{ k_{p,a}(0,\beta) : 0< \beta  \}
 = \sup \{ k_{p,a}(0,\beta) : (p-1-a)/p < \beta  \}
 = k_{p,a}(0,\betaB).
\]
Moreover, $\betaB=\betaB(p,a)$ is the unique solution of the equation
\begin{equation*}
-(p-1-a)|1-\betaB|^{p-2}(\betaB-1)-(p-1)\betaB +p-1-a =0
\end{equation*}
in the interval $(1,\infty)$.
\end{lemma}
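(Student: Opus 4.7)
The plan is to follow the template of Lemma~\ref{lem:tech-A-2} closely. An identical calculation shows that for $\beta>0$, $\beta\neq 1$, the derivative $\frac{d}{d\beta}k_{p,a}(0,\beta)$ has the same sign as
\[
h(\beta)=p-1-a-(p-1)\beta+(p-1-a)|\beta-1|^{p-2}(\beta-1),
\]
so the task reduces to locating the maximum of $k_{p,a}(0,\cdot)$ via the sign of $h$. The crucial new feature is that $1<p<2$, which reverses the convexity behaviour of $h$ on $(0,1)$ and $(1,\infty)$ compared with Lemma~\ref{lem:tech-A-2} and shifts the maximizer from $(0,1)$ to $(1,\infty)$.

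On $(0,1)$ one has $h(\beta)=(p-1-a)-(p-1)\beta-(p-1-a)(1-\beta)^{p-1}$. I would compute $h(0)=0$, $h'(0)=(p-1)(p-2-a)\geq 0$ (using the hypothesis $a\leq p-2$), and $h''(\beta)=-(p-1)(p-2)(p-1-a)(1-\beta)^{p-3}>0$ (the factors $p-2$, $p-3$ are negative while $p-1$, $p-1-a$, $(1-\beta)^{p-3}$ are positive). Thus $h$ is strictly convex with nonnegative initial slope, so $h>0$ on $(0,1]$, and $k_{p,a}(0,\cdot)$ is strictly increasing there.

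On $(1,\infty)$ one has $h(\beta)=(p-1-a)-(p-1)\beta+(p-1-a)(\beta-1)^{p-1}$. Here $h(1^{+})=-a>0$, $h'(1^{+})=+\infty$, and the corresponding second-derivative computation yields $h''<0$, so $h$ is strictly concave. Moreover $h(\beta)\to-\infty$ as $\beta\to\infty$ because, for $1<p<2$, the linear term dominates $(\beta-1)^{p-1}$. Consequently $h$ admits a unique zero $\betaB\in(1,\infty)$, which satisfies the equation in the statement, and $k_{p,a}(0,\cdot)$ attains its maximum on $[1,\infty)$ at $\betaB$. Combined with the previous paragraph, $\betaB$ is the global maximizer on $(0,\infty)$.

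It remains to verify $\betaB>(p-1-a)/p$, which I would do by showing $h(c)>0$ for $c\coloneqq (p-1-a)/p$. Using $pc-(p-1)c=c$ one obtains $h(c)=c+(p-1-a)|c-1|^{p-2}(c-1)$. If $a>-1$, then $c<1$ and $h(c)>0$ directly from the analysis on $(0,1]$; if $a\leq -1$, then $c\geq 1$, and $|c-1|^{p-2}(c-1)=(c-1)^{p-1}\geq 0$, so $h(c)\geq c>0$. Since on $(1,\infty)$ the function $h$ is positive on $[1,\betaB)$ and negative on $(\betaB,\infty)$, the inequality $h(c)>0$ forces $c<\betaB$. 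I do not expect any step to pose a genuine obstacle; the main effort is bookkeeping the signs of $p-2$, $p-3$ and $c-1$, plus splitting into the subcases $a>-1$ and $a\leq -1$ in the last verification.
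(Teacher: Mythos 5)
Your proof is correct and follows essentially the same route as the paper's: the same sign function $h(\beta)=p-1-a-(p-1)\beta+(p-1-a)|\beta-1|^{p-2}(\beta-1)$ governing $\frac{d}{d\beta}k_{p,a}(0,\beta)$, the same convexity on $[0,1]$ / concavity on $[1,\infty)$ analysis, and the same conclusion that the unique zero of $h$ in $(1,\infty)$ is the global maximizer. The only (immaterial) difference is in verifying $h\bigl((p-1-a)/p\bigr)>0$: you split into the cases $a>-1$ and $a\le -1$, whereas the paper bounds $|1+a|^{p-2}(1+a)\le (p-1)^{p-1}$ and invokes Lemma~\ref{lem:ppp}.
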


\begin{proof}
Like in the proof of Lemma~\ref{lem:tech-A-2}, in order to maximize $k_{p,a}(0,\cdot)$ we investigate the function 
\[
 h(\beta) = p-1-a - (p-1)\beta + (p-1-a)|\beta-1|^{p-2}(\beta-1), \quad  \beta\geq 0,
 \]
 which is  of the same sign as the derivative $\frac{d}{d\beta} k_{p,a}(0,\beta)$.
 We have $h(0) = 0$, $h(1) = -a>0$,
\[
   h'(0^+) =  (p-1)(p-2-a) \geq0,
   \]   
    and, for $\beta\in[0,\infty)\setminus\{1\}$, 
  \[
    h''(\beta) =  (p-1)(p-2)(p-1-a)|\beta-1|^{p-4}(\beta-1).
  \]
 Thus $h$ is strictly convex on $[0,1]$ and strictly concave on $[1,\infty)$. Since $h(\beta)\to -\infty$ as ${\beta\to\infty}$, we conclude that $h$ has a unique zero $\betaB = \betaB(p,a)$ in $[1,\infty)$. Moreover, \begin{align*}
 h((p-1-a)/p) &= (p-1-a)\Bigl( 1 - \frac{p-1}{p} - \frac{|1+a|^{p-2}(1+a)}{p^{p-1}}\Bigr) \\
 &\geq (p-1-a)\Bigl( \frac{1}{p} - \frac{(p-1)^{p-1}}{p^{p-1}}\Bigr) > 0
 \end{align*}
 (by Lemma~\ref{lem:ppp}), so $\betaB > (p-1-a)/p$. The maximum of $k_{p,a}(0,\cdot)$ on $[0,\infty)$ is attained  at $\betaB$.
\end{proof}

We continue the proof of Theorem~\ref{thm:B}. 

\begin{proof}[\nextcase:  $1 < p < 2$ and $a \leq p-2$]
Let $\betaB=\betaB(p,a)$ be the constant from the preceding lemma.
 This time, for $x\geq 0$, define
\begin{align*}
 B^p &=   k_{p,a}(0,\betaB) = 
 \frac{p\betaB-(p-1-a) + (p-1-a)|\betaB-1|^p}{(p-1-a)|\betaB|^p},\\
 v(x) &= |1-x|^p -  B^p |x|^p,\\
 u(x) &= \frac{p}{p-1-a} \bigl(\frac{p-1-a}{p} - x\bigr).
 \end{align*}
 
 By Lemma~\ref{lem:tech-B-2}, 
\[
 B^p \geq \frac{px - (p-1-a) + (p-1-a)|1-x|^p}{(p-1-a)x^p}.
\]
for $x>0$. This is equivalent to $v\leq u$ on $(0,\infty)$ (and the inequality at $x=0$ is obvious). Thus, by Proposition~\ref{prop:majorize},  $B_{p,a}\leq B$. Since, by Lemma~\ref{lem:lower_bound}, also $B_{p,a}^p\geq k_{p,a}(0,\betaB) = B^p$, we conclude that  $B_{p,a}=B$. This ends the proof.
 \end{proof}

 We remark that for $a<p-2$ we have $v'(0) < u'(0)$ in the above proof. Thus the majorization $v\leq u$ does not extend to the whole real line. This means that $B_{p,a}<C_{p,a}$ for  $a< p-2$,.
 
What follows is very close to the reasonings in Lemma~3.3 and Proposition~3.7 in \cite{MR3558516}. 

\begin{lemma}
 \label{lem:sup-k-BC}
 Suppose that $1 < p < 2$ and $p-2 < a < p-1$. Denote 
 \[
  s_{p,a} \coloneqq \sup\{  k_{p,a}(\alpha,\beta) : 0\leq  \alpha < (p-1-a)/p <  \beta\}.
 \]
 Then $s_{p,a}\geq ((1+a)/(p-1-a))^p$ and  $s_{p,a} > 1$.
Moreover, if $s_{p,a} > ((1+a)/(p-1-a))^p$, then there exists a point
 \[
  (\alphaBbis,\betaBbis) =   \bigl(\alphaBbis(p,a),\betaBbis(p,a)\bigr) \in \bigl(0,(p-1-a)/p\bigr) \times \bigl((p-1-a)/p,\infty\bigr)
 \]
 such that $s_{p,a} = k_{p,a}(\alphaBbis,\betaBbis)$.
\end{lemma}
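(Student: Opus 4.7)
The lemma has three assertions, and my plan addresses them in order. For the lower bound $s_{p,a}\geq \bigl((1+a)/(p-1-a)\bigr)^p$ I would set $c \coloneqq (p-1-a)/p$ and simply let $\alpha \to c^-$ in \eqref{eq:def_k} with any fixed $\beta > c$: the factor $(c-\alpha)$ appearing in both numerator and denominator tends to zero, and in the limit the ratio reduces to $|c-1|^p/c^p = \bigl((1+a)/(p-1-a)\bigr)^p$. For the sharper bound $s_{p,a}>1$ I would test at $\alpha = 0$ with $\beta \to \infty$; a direct computation and the expansion $(1-1/\beta)^p = 1 - p/\beta + O(\beta^{-2})$ give
\[
 k_{p,a}(0,\beta) - 1 = \frac{1}{c\beta^{p-1}} - \frac{p}{\beta} + O(\beta^{-2}),
\]
and the hypothesis $p<2$ makes $\beta^{-(p-1)}$ dominate $\beta^{-1}$, so the right-hand side is positive for all sufficiently large $\beta$.

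For the main assertion, assume $s_{p,a}>\bigl((1+a)/(p-1-a)\bigr)^p$; combined with the previous step this gives $s_{p,a}>M \coloneqq \max\bigl\{\bigl((1+a)/(p-1-a)\bigr)^p, 1\bigr\}$. I would pick a maximizing sequence $(\alpha_n,\beta_n)$ and, after passing to a subsequence, assume $\alpha_n\to\alpha^*\in[0,c]$ and $\beta_n\to\beta^*\in[c,\infty]$. Continuity of $k_{p,a}$ on the open region disposes of the case $(\alpha^*,\beta^*)\in[0,c)\times(c,\infty)$, where the limit equals $k_{p,a}(\alpha^*,\beta^*)$ itself, so it remains to rule out each boundary possibility. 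The cases $\alpha^*=c$ with $\beta^*$ finite and $\beta^*=c$ with $\alpha^*<c$ both produce limit $\bigl((1+a)/(p-1-a)\bigr)^p\leq M$; taking $\beta^*=\infty$ with $\alpha^*<c$ and dividing numerator and denominator by $\beta_n^p$ yields limit $1\leq M$. The delicate corner is $(\alpha^*,\beta^*)=(c,\infty)$: writing $\alpha_n = c - \delta_n$, the dominant scales in the numerator are $(\beta_n - c)(1-c)^p \sim \beta_n (1-c)^p$ and $\delta_n(\beta_n-1)^p\sim \delta_n\beta_n^p$, and analogously in the denominator, so factoring out $\beta_n$ gives
\[
 k_{p,a}(c-\delta_n,\beta_n) = \frac{(1-c)^p + L_n + o(1)}{c^p + L_n + o(1)}, \qquad L_n \coloneqq \delta_n \beta_n^{p-1}.
\]
Subsequential limits of the right-hand side have the form $\bigl((1-c)^p + L\bigr)/\bigl(c^p + L\bigr)$ with $L\in[0,\infty]$, and this expression is monotone in $L$ with endpoints $\bigl((1+a)/(p-1-a)\bigr)^p$ and $1$, hence bounded by $M$. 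In every boundary case we reach a contradiction with $k_{p,a}(\alpha_n,\beta_n)\to s_{p,a}>M$, so the limit point must lie in $[0,c)\times(c,\infty)$ and by continuity $k_{p,a}(\alpha^*,\beta^*)=s_{p,a}$.

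It remains to upgrade $\alpha^*\geq 0$ to $\alpha^*>0$. I would differentiate $k_{p,a}=N/D$ at $\alpha = 0$: using $\partial_\alpha |\alpha|^p|_{\alpha=0^+}=0$ (which holds since $p>1$), direct computation gives $\partial_\alpha N(0,\beta) = -p(\beta - c) - (\beta - 1)^p$ and $\partial_\alpha D(0,\beta) = -\beta^p$, while $N(0,\beta) = (\beta - c) + c(\beta-1)^p$ and $D(0,\beta) = c\beta^p$. After cancellation, the numerator of $(\partial_\alpha N) D - N (\partial_\alpha D)$ collapses to the clean expression $\beta^p(\beta - c)(2 + a - p)$, which is strictly positive because $\beta > c > 0$ and the hypothesis $a > p - 2$ forces $2 + a - p > 0$. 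Therefore $\alpha\mapsto k_{p,a}(\alpha,\beta)$ is strictly increasing at $\alpha = 0$ for every $\beta > c$, so the maximum cannot lie on $\alpha^*=0$, and we conclude $\alphaBbis\in(0,c)$ and $\betaBbis\in(c,\infty)$. The main obstacle I anticipate is the corner analysis at $(c,\infty)$: balancing the two competing scales $\beta_n$ and $\delta_n$ correctly, and verifying that the monotonicity in $L$ yields the needed uniform upper bound $M$.
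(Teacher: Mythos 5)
Your proof is correct and follows essentially the same route as the paper: both lower bounds come from testing near the boundary of the parameter region, the existence of an interior maximizer is obtained from a maximizing sequence plus a case analysis of the boundary limits, and the sign of $\partial_\alpha k_{p,a}(0^+,\beta)$ (your $\beta^p(\beta-c)(2+a-p)>0$ agrees, up to the positive factor $p$, with the paper's $-(p\beta-(p-1-a))(p-2-a)\beta^p>0$) rules out $\alpha^*=0$. The only cosmetic differences are that the paper handles the corner $(\alpha^*,\beta^*)=(c,\infty)$ via a convex-combination representation of $k_{p,a}$ rather than your explicit balance parameter $L_n=\delta_n\beta_n^{p-1}$ (both give the same conclusion), and that in your expansion of $k_{p,a}(0,\beta)-1$ the omitted term $-\beta^{-p}$ is not actually $O(\beta^{-2})$ when $p<2$, though it is still dominated by $\beta^{-(p-1)}$, so the conclusion $s_{p,a}>1$ stands.
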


\begin{proof}
 First note that $a>p-2>-1$ and hence $(p-1-a)/p<1$. We have
 \[
  s_{p,a} \geq \lim_{\alpha\to \frac{p-1-a}{p}^-} k_{p,a}(\alpha,1) = \Bigl(\frac{1+a}{p-1-a}\Bigr)^p
 \]
 and, for $n$ large enough,
 \[
  s_{p,a} \geq k_{p,a}(0,n) = \frac{pn - (p-1-a)  + (p-1-a)|n-1|^p}{(p-1-a)|n|^p} >1,
 \]
(the sharp inequality holds since $p<2$).

As for the second part, let $(\alpha^{(n)},\beta^{(n)})$, ${n\geq 1}$, be a sequence of points realizing the value of the supremum $s_{p,a}$. By compactness, we may and do assume that $\alpha^{(n)}\to \alpha^{(\infty)}\in [0,(p-1-a)/p]$. Without loss of generality, also $\beta^{(n)}\to\beta^{(\infty)}\in [(p-1-a)/p,+\infty]$. We shall now exclude several different cases in which $s_{p,a}>((1+a)/(p-1-a))^p$ cannot hold. It is useful to notice that $k_{p,a}(\alpha,\beta)$ (with $\alpha>0$) can be written as a convex combination:
\begin{equation}
 \label{eq:k-conv-comb}
 k_{p,a}(\alpha,\beta) = w_1(\alpha,\beta)\cdot|1-1/\alpha|^p + w_2(\alpha,\beta)\cdot |1-1/\beta|^p,
\end{equation}
where
\begin{align*}
w_1(\alpha,\beta) &= \frac{(\beta-\frac{p-1-a}{p})|\alpha|^p}{(\beta-\frac{p-1-a}{p})|\alpha|^p + (\frac{p-1-a}{p}-\alpha)|\beta|^p}, \\
w_2(\alpha,\beta) &= \frac{ (\frac{p-1-a}{p}-\alpha)|\beta|^p}{(\beta-\frac{p-1-a}{p})|\alpha|^p + (\frac{p-1-a}{p}-\alpha)|\beta|^p}.
\end{align*}

If $\beta^{(\infty)} = (p-1-a)/p$, then $k_{p,a}(\alpha^{(n)},\beta^{(n)})\to ((1+a)/(p-1-a))^p$ (regardless of whether $\alpha^{(\infty)} = (p-1-a)/p$ or not) and $s_{p,a}=((1+a)/(p-1-a))^p$.

If $\beta^{(\infty)} = +\infty$, then $\alpha^{(\infty)} = (p-1-a)/p$ (otherwise, by \eqref{eq:k-conv-comb}, we would have $s_{p,a}=1$). Again by \eqref{eq:k-conv-comb}, we conclude
that
\[
 \min\{1,((1+a)/(p-1-a))^p\} \leq s_{p,a} \leq \max\{1,((1+a)/(p-1-a))^p\},
\]
which by the first part of the lemma yields $s_{p,a}=((1+a)/(p-1-a))^p$. 

Hence, let us consider the case  $(p-1-a)/p < \beta^{(\infty)} < \infty$.
Note that inevitably $\alpha^{(\infty)}>0$, since $ \frac{d}{d\alpha} k_{p,a}(0^+,\beta) $ is of the same sign as 
\[
-(p\beta- (p-1-a))(p-2-a)\beta^p >0.
 \]
If $\alpha^{(\infty)}$ is equal $(p-1-a)/p$, then  $s_{p,a}=((1+a)/(p-1-a))^p$. Thus, if $s_{p,a}>((1+a)/(p-1-a))^p$ then $\alpha^{(\infty)}< (p-1-a)/p$  and we can take $(\alphaBbis,\betaBbis) = (\alpha^{(\infty)},\beta^{(\infty)})$. This ends the proof.
\end{proof}

We are ready for the last case in the proof of Theorem~\ref{thm:B}.

\begin{proof}[\nextcase: $1 < p < 2$ and $p-2 < a < p-1$]
Fix $1 < p < 2$ and $p-2 < a < p-1$. We shall consider two subcases.

\emph{Subcase 1.} Suppose first that $s_{p,a}>((1+a)/(p-1-a))^p$ and let $(\alphaBbis,\betaBbis)$ be the point identified in  Lemma~\ref{lem:sup-k-BC}. Denote 
\begin{align*}
 B^p &= k_{p,a}(\alphaBbis,\betaBbis),\\
 v(x) &= |x-1|^p - B^p |x|^p,\\
 u(x) &= D(p-1-a - px),
 \end{align*}
 where
$(\alphaBbis,\betaBbis)$ are defined in Lemma~\ref{lem:sup-k-BC} and
\[
 D\coloneqq \frac{k_{p,a}(\alphaBbis,\betaBbis)|\betaBbis|^p -|\betaBbis-1|^p }{p\betaBbis - (p-1-a)} = -\frac{k_{p,a}(\alphaBbis,\betaBbis)|\alphaBbis|^p -|\alphaBbis-1|^p }{p-1-a - p\alphaBbis} 
\]
(both numbers are equal  by the definition of $k_{p,a}$).

For every $x\in[0, (p-1-a)/p)$,
\begin{align*}
k_{p,a}(\alphaBbis,\betaBbis) 
&\geq
k_{p,a}(x,\betaBbis) \\
&=
\frac{(p\betaBbis - (p-1-a))|x-1|^p + (p-1-a - px)|\betaBbis-1|^p }{(p\betaBbis - (p-1-a))|x|^p + (p-1-a - px)|\betaBbis|^p},
\end{align*}
which is equivalent to
\begin{equation}
\frac{k_{p,a}(\alphaBbis,\betaBbis)|\betaBbis|^p -|\betaBbis-1|^p }{p\betaBbis - (p-1-a)}(p-1-a -px) \geq |x-1|^p - B^p |x|^p.
\end{equation}
For every $x\in((p-1-a)/p,\infty)$,
\begin{align*}
k_{p,a}(\alphaBbis,\betaBbis) 
&\geq
k_{p,a}(\alphaBbis,x) \\
&=
\frac{(px - (p-1-a))|\alphaBbis-1|^p + (p-1-a - p\alphaBbis)|x-1|^p }{(px - (p-1-a))|\alphaBbis|^p + (p-1-a - p\alphaBbis)|x|^p},
\end{align*}
which is equivalent to
\begin{equation}
-\frac{k_{p,a}(\alphaBbis,\betaBbis)|\alphaBbis|^p -|\alphaBbis-1|^p }{p-1-a - p\alphaBbis}(p-1-a -px) \geq |x-1|^p - B^p |x|^p.
\end{equation}
We conclude that $u(x) \geq v(x)$ 
for $x\geq 0$ (for $x=(p-1-a)/p$ the inequality holds, since $B\geq (1+a)/(p-1-a)$).
Hence, by Proposition~\ref{prop:majorize}, $B_{p,a}\leq B$. By Lemma~\ref{lem:lower_bound} and the definition of $B$, we conclude that $B_{p,a}=B$.

Moreover, the fact that
\[
 \frac{d}{d\alpha} k_{p,a}(\alpha,\beta)\bigg\rvert_{(\alpha,\beta) = (\alphaBbis,\betaBbis)} 
 = \frac{d}{d\beta} k_{p,a}(\alpha,\beta)\bigg\rvert_{(\alpha,\beta) = (\alphaBbis,\betaBbis)}
 = 0 
\]
implies that $u$ is tangent to $v$ at $x\in \{\alphaBbis,\betaBbis\}$.
Since $B>1$, Lemma~\ref{lem:ccc} yields that $v$ is first, concave, then convex, then again concave. Thus, by Lemma~\ref{lem:rolle}, the majorization $v\leq u$ extends to the whole line, so $C_{p,a} = B_{p,a} = B$. This ends the proof in the first subcase.

\emph{Subcase 2.} Suppose now that in Lemma~\ref{lem:sup-k-BC}  we have   $s_{p,a}=((1+a)/(p-1-a))^p$. Denote 
\begin{align*}
 B &=  \frac{1+a}{p-1-a},\\
 v(x) &= |1-x|^p -  B^p |x|^p,\\
 u(x) &= \frac{(1+a)^{p-1}}{p^{p-3}(p-1-a)} \bigl(\frac{p-1-a}{p} - x\bigr).
\end{align*}
The function $u$ is tangent to $v$ at $x=(p-1-a)/p$.

We claim that $v(x) \leq u(x)$ for $x\geq 0$. Indeed, suppose by contradiction that 
there exists some $x_0\geq 0$, such that
\begin{equation}\label{ineq:ban-jan-not}
|x_0-1|^p -B^p|x_0|^p > \frac{(1+a)^{p-1}}{p^{p-3}(p-1-a)} \bigl(\frac{p-1-a}{p} - x\bigr).
\end{equation}
Of course we cannot have $x_0 = \frac{p-1-a}{p} $. Suppose first, that $x_0 > \frac{p-1-a}{p} $. Since
\begin{equation*}
\lim_{x\to\frac{p-1-a}{p} } \frac{|x-1|^p - B^p|x|^p}{x-\frac{p-1-a}{p} } = -\frac{(1+a)^{p-1}}{p^{p-3}(p-1-a)},
\end{equation*}
we conclude from~\eqref{ineq:ban-jan-not}, that for some 
\[
(\alpha,\beta)\in \bigl(0,(p-1-a)/p\bigr)\times\bigl((p-1-a)/p,\infty\bigr)
\]
we have
\begin{equation*}
|\beta-1|^p - B^p|\beta|^p > \frac{|\alpha-1|^p - B^p|\alpha|^p}{\alpha-\frac{p-1-a}{p} } (\beta-\frac{p-1-a}{p} )
\end{equation*}
(it suffices to take $\beta=x_0$ and $\alpha$ smaller than, but close to $\frac{p-1-a}{p} $) or equivalently
\begin{equation*}
k_{p,a}(\alpha,\beta) > \Bigl(\frac{1+a}{p-1-a}\Bigr)^p
\end{equation*}
We arrive at the same conclusion, if $x_0 < \frac{p-1-a}{p} $ (it suffices to take $\alpha = x_0$ and $\beta$ greater than, but close to $\frac{p-1-a}{p} $). 
This finishes the proof of the claim.

The claim together with Proposition~\ref{prop:majorize}, implies that $B_{p,a}\leq B$. By Lemma~\ref{lem:lower_bound} and the definition of $B$, we conclude that $B_{p,a}=B$.

Moreover,  
\[
 \frac{p-1-a}{p} \leq \frac{1}{1+B^{p/(p-2)}}.
\]
(since $B = (p-1-a)/p>1$ by Lemma~\ref{lem:sup-k-BC}). 
Since $u$ is tangent to $v$ at $(p-1-a)/p$ and,by Lemma~\ref{lem:ccc}, $v$ is concave on $\bigl(-\infty,{1}/({1+B^{p/(p-2)}})\bigr)$, we conclude that the majorization $v\leq u$ extends to the whole line, so $C_{p,a} = B_{p,a} = B$. This ends the proof in the second subcase.
\end{proof}

We have considered all cases and the proof of Theorem~\ref{thm:B} is finished.
\setcounter{proofincases}{0} 

\section{Comparisons of \texorpdfstring{$H$}{H} and its adjoint}
\label{sec:Hstar}

We start with the following observation.

\begin{lemma}
\label{thm:A-reversed}
For $1\leq p<\infty$, $-1<a<p-1$, and any positive decreasing function $\varphi\colon [0,\infty)\to \RR$ such that $\lim_{t\to \infty}\varphi(t)=0$, we have
\begin{equation}
\label{eq:thm-A-reversed}
  \int_0^\infty |\varphi (t)|^p t^a dt \leq A_{p,p-2-a}^p \int_0^\infty |H\varphi(t) - \varphi(t)|^p t^a dt.
\end{equation}
\end{lemma}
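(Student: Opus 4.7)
The plan is to reduce \eqref{eq:thm-A-reversed} to the defining inequality for $A_{p,p-2-a}$ via the reciprocal change of variable $t\leftrightarrow 1/t$, which conjugates $H$ and $H^*$ (up to a scalar factor). Throughout I may assume $\int_0^\infty |H\varphi-\varphi|^p t^a\,dt<\infty$, as otherwise \eqref{eq:thm-A-reversed} is trivial.

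Set $g:=H\varphi-\varphi$, which is nonnegative because $\varphi$ is decreasing. First I would verify the identity $\varphi=H^*g-g$. This follows from a Fubini computation:
\[
\int_t^\infty \frac{H\varphi(s)}{s}\,ds = \int_0^\infty \varphi(r)\int_{\max(t,r)}^\infty \frac{ds}{s^2}\,dr = H\varphi(t)+H^*\varphi(t),
\]
whence $H^*g(t)=H\varphi(t)+H^*\varphi(t)-H^*\varphi(t)=H\varphi(t)$ and $H^*g(t)-g(t)=\varphi(t)$. The required convergence of $H^*\varphi(t)$ and $\int_t^\infty H\varphi(s)/s\,ds$ follows from H\"older's inequality and our finiteness assumption, using $a>-1$.

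Next, introduce $\tilde g(u):=g(1/u)/u$ for $u>0$ and collect three facts:
\begin{enumerate}[(i)]
\item The substitution $u=1/t$ gives
\[
\int_0^\infty g(t)^p t^a\,dt = \int_0^\infty \tilde g(u)^p u^{p-2-a}\,du.
\]
\item The same substitution in the integral defining $H^*g$ yields the key conjugation
\[
H^*g(1/u)=\int_0^u \tilde g(v)\,dv = u\cdot H\tilde g(u),
\]
so $\varphi(1/u)=(H^*g-g)(1/u)=u\cdot(H-I)\tilde g(u)$. One further change of variable then gives
\[
\int_0^\infty \varphi(t)^p t^a\,dt = \int_0^\infty |(H-I)\tilde g(u)|^p u^{p-2-a}\,du.
\]
\item $\tilde g$ is positive and nonincreasing. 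Positivity is clear; monotonicity is equivalent to $t\mapsto tg(t)$ being nondecreasing. Using the representation
\[
tg(t)=\int_0^t(\varphi(s)-\varphi(t))\,ds,
\]
for $0<t_1<t_2$ I would write
\[
t_2 g(t_2)-t_1 g(t_1) = \int_{t_1}^{t_2}(\varphi(s)-\varphi(t_2))\,ds + t_1(\varphi(t_1)-\varphi(t_2)) \geq 0,
\]
both terms being nonnegative by monotonicity of $\varphi$; this bypasses any regularity assumption.
\end{enumerate}

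Finally, since $-1<a<p-1$ is equivalent to $-1<p-2-a<p-1$, $\tilde g$ lies in the admissible class for $A_{p,p-2-a}$. By (i) the right-hand side of its defining inequality is finite, so
\[
\int_0^\infty |(H-I)\tilde g(u)|^p u^{p-2-a}\,du \leq A_{p,p-2-a}^p \int_0^\infty \tilde g(u)^p u^{p-2-a}\,du.
\]
Translating both sides back via (i) and (ii) yields \eqref{eq:thm-A-reversed}. The main technical point is the monotonicity statement in (iii), but the integral representation above handles it in one line; the rest is bookkeeping around Fubini and the $t\leftrightarrow 1/t$ substitution.
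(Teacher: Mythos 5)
Your argument is correct in substance, and it takes a genuinely different route from the paper. The paper proves the lemma by re-running the majorization scheme of Proposition~\ref{prop:majorize}: after quoting an external integrability result to legitimize the integration by parts, it observes that the required pointwise inequality
\[
|x|^p - A_{p,p-2-a}^p|1-x|^p\le D\Bigl(\tfrac{p-1-a}{p}-x\Bigr),\qquad x\in[0,1],
\]
becomes, under the reflection $x\mapsto 1-x$, exactly the Bellman-type majorization already established for the parameter $p-2-a$. You exploit the same $a\leftrightarrow p-2-a$ symmetry at the level of the operators instead, via the inversion $t\mapsto 1/t$, which conjugates $H^*$ into $H$; this lets you invoke Theorem~\ref{thm:A} as a black box (applied to $\tilde g$) rather than reopening its proof. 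Your identity $\varphi=H^*g-g$ with $g=H\varphi-\varphi$ is the reverse of the relation $H\varphi-\varphi=Hf$ for $\varphi=H^*f$ used in the proof of Corollary~\ref{cor:E}, and your one-line monotonicity argument for $t\mapsto tg(t)$ (equivalently, for $\tilde g$ being nonincreasing) is clean and needs no regularity. What your route buys is independence from the cited integrability lemma and from the internal structure of the proof of Theorem~\ref{thm:A}; what it costs is the change-of-variables bookkeeping, all of which checks out.

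One step is under-justified. In the Fubini computation you cancel $H^*\varphi(t)$ from the identity $H^*g(t)+H^*\varphi(t)=H\varphi(t)+H^*\varphi(t)$, asserting that $H^*\varphi(t)<\infty$ ``follows from H\"older's inequality and our finiteness assumption.'' It does not follow directly: H\"older applied to the hypothesis $\int_0^\infty g^pt^a\,dt<\infty$ controls $H^*g(t)$, not $H^*\varphi(t)$, and a priori one does not know $\int_0^\infty\varphi^pt^a\,dt<\infty$ --- that finiteness is essentially the conclusion of the lemma (for $\varphi(t)=1/\log(e+t)$ one has $H^*\varphi\equiv\infty$ even though $H^*g<\infty$ everywhere). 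The cleanest repair avoids the cancellation altogether: since $\frac{d}{ds}H\varphi(s)=-g(s)/s$ for a.e.\ $s>0$, one has $\int_t^T g(s)s^{-1}\,ds=H\varphi(t)-H\varphi(T)$, and $H\varphi(T)\to0$ as $T\to\infty$ because $\varphi(t)\to0$ implies Ces\`aro convergence of its averages; letting $T\to\infty$ gives $H^*g=H\varphi$, hence $H^*g-g=\varphi$, with no reference to $H^*\varphi$. With that local repair your proof is complete.
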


\begin{proof}
By a reasoning as in Proposition~\ref{prop:majorize}, it suffices to prove that
\begin{equation}
 \label{eq:maj_p-2-a}
  |x|^p - A_{p,p-2-a}^p |1 - x|^p \leq D \Bigl( \frac{p-1-a}{p} - x\Bigr).
\end{equation}
for some constant $D$ and all $x\in[0,1]$. 
Note that if $\varphi$ is a function like above and $\int_0^\infty |H\varphi(t) - \varphi(t)|^p t^a dt<\infty$, then also
\begin{equation*}
  \int_0^\infty |\varphi(t)|^p t^a dt<\infty,\quad  \int_0^\infty |H\varphi(t)|^p t^a dt<\infty,
\end{equation*}
see, e.g., Proposition~7.12 and Lemma~4.5 in \cite{MR928802} (their premises are fulfilled, since our assumptions imply that $\lim_{t\to \infty} H\varphi(t)=0$ and $p/(a+1)>1$). Therefore the integration by parts from the proof of Proposition~\ref{prop:majorize} can be carried out legally.

Substituting $ \mathrm{x} = 1-x$ transforms \eqref{eq:maj_p-2-a} into 
\[
  |1-\mathrm{x}|^p - A_{p,p-2-a}^p |\mathrm{x}|^p \leq -D \Bigl( \frac{p - 1 - (p-2-a)}{p} -\mathrm{x}\Bigr), \quad \mathrm{x}\in[0,1],
\]
which is exactly the inequality we proved during the proof of Theorem~\ref{thm:B} (note that $-1< p-2-a <p-1$).
This finishes the proof.
\end{proof}

We can now proceed as in \cite{MR3180926} to prove Corollary~\ref{cor:E}. 
For the sake of completeness and for the reader's convenience, we recall the reasoning in detail.

\begin{proof}[Proof of Corollary~\ref{cor:E}] Fix $1\leq p<\infty$ and $-1<a<p-1$. If $f\colon [0,\infty)\to \RR$ is positive and $ \int_0^\infty |H^*f(t)|^p t^a dt < \infty$, then the function
\[
 \varphi(t):= H^* f(t) 
\]
is finite, positive, and decreasing. By Fubini's theorem, 
\begin{equation*}
 H\varphi(t) = \frac{1}{t} \int_0^t \int_s^\infty \frac{f(u)}{u} du\ ds = \frac{1}{t} \int_0^\infty \min\{u,t\} \frac{f(u)}{u} du = Hf(t) + \varphi(t).  
\end{equation*}
Thus
\[
H \varphi(t) - \varphi(t) =  H f(t)
\]
and the left-hand side inequality in \eqref{eq:cor:E} follows immediately from Theorem~\ref{thm:A}.


Similarly,  to prove the right-hand side inequality in \eqref{eq:cor:E} we use Lemma~\ref{thm:A-reversed}.
Indeed, if $f\colon [0,\infty)\to \RR$ is positive and $ \int_0^\infty |Hf(t)|^p t^a dt < \infty$, then by Fubini's theorem, 
\begin{align*}
 H^* f(t) &=  \int_t^\infty \frac{1}{u^2} \int_t^u f(s)ds\  du = \int_t^\infty \Bigl( \frac{1}{u} Hf(u) du - \frac{1}{u^2} \int_0^t f(s)ds\Bigr) du \\
&=  H^*\bigl( Hf\bigr)(t) - Hf(t)
\end{align*}
By H\"older's inequality,
\[
 \int_t^\infty H f(u) \frac{du}{u} 
 \leq \Bigl(\int_t^\infty |Hf(u)|^p u^a du\Bigr)^{1/p}  \Bigl(\int_t^\infty u^{ \frac{(-a/p -1)p}{p-1}} du\Bigr)^{p/(p-1)}
 <\infty
\]
(recall that $a>-1$). Thus, $H^*\bigl( Hf \bigr)(t)$, and consequently also $H^* f(t)$, is finite for any $t>0$. Therefore, we can define $\varphi$ as before and then use Lemma~\ref{thm:A-reversed}.

The constants are optimal since the constants from Theorem~\ref{thm:A} and Lemma~\ref{thm:A-reversed}  are best possible and the reasoning from the beginning of the proof can be reversed:
for a positive decreasing locally absolutely continuous $\varphi:[0,\infty)\to\RR$ with $\lim_{t\to \infty}\varphi(t)=0$ one can set
\[
 f(t) \coloneqq -t\varphi'(t),
\]
so that $H^*f(t) = \varphi(t)$ (cf.\ \cite{MR3180926}).
\end{proof}

\begin{proof}[Proof of Proposition~\ref{prop:F}]
By a reasoning as in the proof of Corollary~\ref{cor:E}, $F_{p,a}\leq C_{p,a}$ (c.f. \cite[Lemma 1.2]{MR3868629}). Below we construct a family of functions which shows that $F_{p,a}\geq C_{p,a}$.

Fix $1\leq p<\infty$, $a<p-1$. Fix also $\alpha < (p-1-a)/p< \beta$ and for $n\in \NN$, consider the function
\begin{equation*}
 f_n(t) = \beta(\beta-1) t^{\beta-1} \ind{[0,1]}(t) + n(\alpha-\beta)\ind{[1-1/n,1]}(t) + \alpha(\alpha-1) t^{\alpha-1}\ind{1,\infty}(t). 
\end{equation*}
We have
\begin{equation*}
 Hf_n(t) = 
 \begin{cases}
  (\beta-1) t^{\beta-1} & \text{ for } t\in [0,1-1/n],\\
   (\beta-1) t^{\beta-1} + n(\alpha-\beta)(t-1+1/n)\frac{1}{t} & \text{ for } t\in (1-1/n,1],\\
    (\alpha-1) t^{\alpha-1}   & \text{ for } t\in (1,\infty),\\
 \end{cases}
\end{equation*}
and
\begin{equation*}
 H^*f_n(t) = 
 \begin{cases}
 -\alpha t^{\alpha-1}   & \text{ for } t\in (1,\infty),\\
    -\alpha - n(\alpha-\beta)\ln(t) +\beta-\beta t^{\beta-1} & \text{ for } t\in (1-1/n,1],\\
 -\beta t^{\beta-1} +(\beta-\alpha)(1+n\ln(1-1/n))& \text{ for } t\in [0,1-1/n].\\
 \end{cases}
\end{equation*}

On the intervals $[1-1/n,1]$, $n\geq 2$, the functions $ Hf_n$, $ H^*f_n$, and the weight $t^a$ are bounded. Hence,
\begin{align*}
 \lim_{n\to\infty} \int_0^\infty |Hf_n(t)|^p t^a dt &= |\beta-1|^p \int_0^1 t^{p(\beta-1)+a} dt  + |\alpha-1|^p \int_1^\infty t^{p(\alpha-1)+a} dt\\
 &= \frac{|\beta-1|^p}{p(\beta-1)+a+1} -\frac{|\alpha-1|^p}{p(\alpha-1)+a+1},\\
  \lim_{n\to\infty} \int_0^\infty |H^*f_n(t)|^p t^a dt &= |\beta|^p \int_0^1 t^{p(\beta-1)+a} dt  + |\alpha|^p \int_1^\infty t^{p(\alpha-1)+a} dt\\
  &= \frac{|\beta|^p}{p(\beta-1)+a+1} -\frac{|\alpha|^p}{p(\alpha-1)+a+1}.
\end{align*}
The ratio of those two quantities is a lower bound for $F_{p,a}^p$. Taking the supremum over $\alpha<(p-1-a)/p<\beta$ and using Theorem~\ref{thm:C} yields $F_{p,a}\geq C_{p,a}$. 
\end{proof}

\section*{Acknowledgements}

I thank Santiago Boza and Javier Soria for a conversation which initiated this research.

  \bibliographystyle{amsplain}
  \bibliography{H-I}

\end{document}